\chardef\bslash=`\\ 
\newtheorem[{}\it]{thm}{Theorem}[section]
\newtheorem{cor}[thm]{Corollary}
\newtheorem{lem}[thm]{Lemma}
\theoremstyle{remark} 
\theoremstyle{definition}
\newtheorem{defn}{\textsc{Definition}}[section]
\newtheorem{rem}{Remark}[section]
\newtheorem*[{}\it]{notation}{Notation}
\newtheorem*[{}\it]{rest}{\textsc{Theorem}}
\newtheorem*[{}\it]{quest}{\textsc{Question}}
\newtheorem*[{}\it]{problemo}{\textsc{Problem}}
\newtheorem*[{}\it]{projone}{\texttt{Project 1 (Future Work)}}
\newtheorem*[{}\it]{projtwo}{\texttt{Project 2 (Current Work)}}
\newtheorem*[{}\it]{projthree}{\texttt{Project 3 (Current Work)}}
\newtheorem*[{}\it]{projfour}{\texttt{Project 4 (Future Work)}}
\newtheorem*[{}\it]{projfive}{\texttt{Project 5 (Future Work)}}
\newtheorem*[{}\it]{proofoflemma}{Proof of Lemma}
\title[]{On the Initial Boundary-Value Problem \\ in the Kinetic Theory of Hard Particles I: Non-existence}
\author[]{Mark Wilkinson}\thanks{Department of Mathematics, Heriot-Watt University and the Maxwell Institute for Mathematical Sciences, Edinburgh, Scotland (\Letter) \href{mailto:mark.wilkinson@hw.ac.uk}{mark.wilkinson@hw.ac.uk}}
\newcommand{\ov}{\overline}
\newcommand{\boundellipse}[3]% center, xdim, ydim
{(#1) ellipse (#2 and #3)
}
\begin{document}

\maketitle

\begin{abstract}
{\noindent In the first of two papers, we study the initial boundary-value problem that underlies the theory of the Boltzmann equation for general non-spherical hard particles. In this work, for two congruent ellipses and for a large class of associated boundary conditions, we identify initial conditions for which there do not exist local-in-time weak solutions of Newton's equations of motion. To our knowledge, this is the first time the necessity of {\em rolling} in the energy-conserving dynamics of strictly-convex rigid bodies has been demonstrated. This study was, in part, motivated by a recent observation of \textsc{Palffy-Muhoray, Virga, Wilkinson and Zheng} \cite{palffy2017paradox} on the interpenetration of strictly-convex rigid bodies. 
}
\end{abstract}
%\noindent{\HUGE \textcolor{RoyalBlue}{{\bf Mark Wilkinson}}}
% Dr Mark Wilkinson BSc. MSt. (Cantab) DPhil (Oxon)
%\vspace{4mm}
%
%\noindent\textcolor{RoyalBlue}{{\huge \'{E}cole Normale Sup\'{e}rieure, Paris} {\hspace{2mm}\small and}}\vspace{0.5mm}\\
%\noindent\textcolor{RoyalBlue}{{\huge Courant Institute of Mathematical Sciences, New York}}
%\vspace{-5mm}
%\begin{center}
%\includegraphics[width=6in, height=0.5in]{nematic.jpg} 
%\end{center}

\vspace{3mm}
\section{Introduction}
Let $d=2, 3$. Suppose $\mathsf{P}_{\ast}$ is a compact, connected subset of $\mathbb{R}^{d}$ whose boundary is rectifiable. Consider an evolution of two congruent copies of this set in $\mathbb{R}^{d}$ expressed by
\begin{equation}\label{setevo}
\mathsf{P}(t):=R(t)\mathsf{P}_{\ast}+x(t) \qquad \textrm{and} \qquad \ov{\mathsf{P}}(t):=\ov{R}(t)\mathsf{P}_{\ast}+\ov{x}(t),
\end{equation}
where the associated dynamics 
\begin{equation*}
t\mapsto X(t):=[x(t), \ov{x}(t), R(t), \ov{R}(t)]\in \mathcal{M}_{d}:=\mathbb{R}^{d}\times\mathbb{R}^{d}\times\mathrm{SO}(d)\times\mathrm{SO}(d)
\end{equation*} 
on $\mathbb{R}$ takes its range in the set
\begin{equation*}
\mathcal{P}_{d}(\mathsf{P}_{\ast}):=\left\{X\in\mathcal{M}_{d}\,:\,\mathscr{L}_{d}((R\mathsf{P}_{\ast}+x)\cap(\ov{R}\mathsf{P}_{\ast}+\ov{x}))=0\right\},
\end{equation*}
where $\mathscr{L}_{d}$ denotes the $d$-dimensional Lebesgue measure on $\mathbb{R}^{d}$. In this article, we address the problem of the assignment of boundary conditions on the boundary of the set of physically-admissible configurations
\begin{equation*}
\partial\mathcal{P}_{d}(\mathsf{P}_{\ast}):=\left\{X\in\mathcal{M}_{d}\,:\,
\begin{array}{c}
(R\mathsf{P}_{\ast}+x)\cap(\ov{R}\mathsf{P}_{\ast}+\ov{x})\neq \varnothing,\\ 
\mathrm{and}\hspace{2mm}\mathscr{L}_{d}((R\mathsf{P}_{\ast}+x)\cap(\ov{R}\mathsf{P}_{\ast}+\ov{x}))=0
\end{array}
\right\}
\end{equation*}
in the construction of any global-in-time dynamics describing the evolution of the sets. Indeed, henceforth we call any $X\in\partial\mathcal{P}_{d}(\mathsf{P}_{\ast})$ a {\bf collision configuration} of the sets $\mathsf{P}$ and $\ov{\mathsf{P}}$. More specifically, in what follows we initiate a study as to how (i) the regularity of the set dynamics $t\mapsto X(t)$, and also (ii) the geometric properties of the set $\mathsf{P}_{\ast}$ affect what constitute {\bf pre-collisional} and {\bf post-collisional} velocities for a given collision configuration $X\in\partial\mathcal{P}_{d}(\mathsf{P}_{\ast})$. Understanding how to partition the set of all velocities into subsets of pre- and post-collisional velocities is an essential ingredient in the construction of {\em scattering maps} associated to the collisions of $\mathsf{P}$ and $\ov{\mathsf{P}}$ and, in turn, scattering maps are an important ingredient in the construction of global-in-time solutions of any `physical' ODE which describe their evolution. 
\subsection{A Decision Problem}\label{decision}
Suppose $N\gg1$ is an integer, and that $\mathsf{P}_{1}, ..., \mathsf{P}_{N}\subset\mathbb{R}^{d}$ are initial sets with the property $\mathscr{L}_{d}(\mathsf{P}_{i}\cap \mathsf{P}_{j})=0$ for $i\neq j$, with each $\mathsf{P}_{k}$ congruent to some fixed compact, strictly-convex $\mathsf{P}_{\ast}\subset\mathbb{R}^{d}$. In the classical kinetic theory of dilute gases, at the microscopic level one is faced with the construction of solutions -- in an appropriate sense -- of Newton's equations of motion for the time evolution of the $N$ sets $\mathsf{P}_{1}(t), ..., \mathsf{P}_{N}(t)$ given by
\begin{equation*}
\mathsf{P}_{i}(t):=R_{i}(t)\mathsf{P}_{i}+x_{i}(t)\quad \text{for}\hspace{2mm}i=1, ..., N,
\end{equation*}
where the ODE governing the phase maps are themselves given by
\begin{equation}\label{masta}
\left\{
\begin{array}{ll}
\displaystyle \frac{dx_{i}}{dt}=v_{i} & \displaystyle \frac{dR_{i}}{dt}=R_{i}\Omega_{i}, \vspace{2mm}\\
\displaystyle \frac{dv_{i}}{dt}=0, & \displaystyle\frac{d\omega_{i}}{dt}=0,
\end{array}
\right.
\end{equation}
with $\Omega_{i}\in\mathbb{R}^{3\times 3}$ being the angular velocity tensor associated to the angular velocity vector $\omega_{i}\in\mathbb{R}^{3}$. Our interest in the existence, uniqueness and qualitative properties of solutions of the initial boundary-value problem associated to \eqref{masta} stems primarily from their connection with weak solutions\footnote{There have been many (oftentimes, dimension-dependent) notions of weak solution established for the Boltzmann equation, among which lie distributional solutions of \textsc{Cercignani} \cite{Cercignani2005}, renormalised solutions of \textsc{DiPerna and Lions} \cite{diperna1989cauchy}, and mild solutions of \textsc{Ars\'{e}nio} \cite{arsenio2011global}.} of the Boltzmann equation. This important connection is provided by the so-called {\em Boltzmann-Grad limit} of the $N$-particle system as the number of gas particles becomes unbounded, namely when
\begin{equation}
\quad N(\mathrm{diam}\,\mathsf{P}_{\ast})^{d-1}=\mathcal{O}(1)\quad \text{as}\hspace{2mm}N\rightarrow\infty.
\end{equation}
More precisely, one looks to prove that a rescaled family of weak solutions of the BBGKY hierarchy associated to \eqref{masta} is precompact in a suitable function space topology, and that all limit points of this family are chaotic weak solutions of the Boltzmann hierarchy on $\mathbb{R}^{d}$. We refer the reader to the text of \textsc{Cercignani, Ilner and Pulvirenti} (\cite{cercignani2013mathematical}, chapter 2) for a discourse on the formal procedure that links solutions of \eqref{masta} with solutions of the Boltzmann equation. 

There still remain challenges in establishing a comprehensive Cauchy theory for a suitable notion of weak solution to Newton's equations for set evolution in the case when each of the initial sets $\mathsf{P}_{j}$ is non-spherical. The rigorous theory of the Boltzmann equation for non-spherical particles was recently initiated by \textsc{Saint-Raymond and Wilkinson} \cite{saint2015collision}. In that article, it was tacitly assumed that (i) there was only one `physical' family of scattering maps that resolved the collision between two non-spherical sets that conserved total linear momentum, angular moment and kinetic energy, and (ii) that it was always possible to construct an associated global-in-time weak solution for any given admissible datum in phase space. In a companion article to this work, namely \textsc{Wilkinson} \cite{me13}, we show that there are {\em infinitely-many} families of scattering maps which resolve the collision between two non-spherical sets that respect all the conservation laws of classical mechanics. In turn, this observation gives rise to the {\bf non-uniqueness} of global-in-time weak solutions to the initial boundary-value problem associated to Newton's equations on $\mathcal{P}_{d}(\mathsf{P}_{\ast})$. However, in the present article, we are able to demonstrate the {\bf non-existence} of local-in-time weak solutions of the initial boundary-value problem associated to \eqref{masta} for:
\begin{itemize}
\item the particular case where $\mathsf{P}_{\ast}$ is an ellipse $\mathsf{E}_{\ast}$ in the plane ($d=2$);
\item a large class of boundary conditions (so-called frictionless scattering maps) associated thereto, including the oft-employed {\em Boltzmann scattering}; and
\item a set of initial conditions lying in a co-dimension 1 subset of $\mathcal{P}_{2}(\mathsf{E}_{\ast})\times \mathbb{R}^{6}$.
\end{itemize}
Our main result focuses on those particles which are ellipses and ellipsoids due to the fact they are realised as the level sets of explicit algebraic functions on $\mathbb{R}^{2}$ and $\mathbb{R}^{3}$, respectively: our method of proof makes crucial use of this fact. We nevertheless expect our results to hold true for all sufficiently-smooth and strictly convex non-spherical particles. The precise notion of weak solution to system \eqref{masta} we employ in our work is given in section \ref{scatteringmaps} below. 
\subsection*{A Velocity Decision Problem}
During the na\"{i}ve construction of any dynamics $t\mapsto X(t)$ that constitutes a solution of the initial boundary-value problem associated to \eqref{masta}, one is faced with the following {\em velocity decision problem}. Suppose a collision configuration $X\in\partial\mathcal{P}_{d}(\mathsf{P}_{\ast})$ is given and fixed. Consider assigning (i) the centres of mass $x, \ov{x}\in\mathbb{R}^{d}$ each with an initial linear velocity $v, \ov{v}\in\mathbb{R}^{d}$, and (ii) the sets with an initial angular velocity $\omega, \ov{\omega}\in\mathbb{R}^{n(d)}$, where $n(2)=1$ and $n(3)=3$. For brevity, let us concatenate this initial velocity data in a single velocity vector $V:=[v, \ov{v}, \omega, \ov{\omega}]\in\mathbb{R}^{2d}\times\mathbb{R}^{2n(d)}$. We ask the following questions:
\vspace{2mm}

\begin{enumerate}[{\bf (Q1)}]
\item For the given collision configuration $X\in\partial\mathcal{P}_{d}(\mathsf{P}_{\ast})$, is $V\in\mathbb{R}^{2d}\times\mathbb{R}^{2m(d)}$ pre-collisional or post-collisional?
\item For the given collision configuration $X\in\partial\mathcal{P}_{d}(\mathsf{P}_{\ast})$, do there exist inadmissible velocity vectors $V\in\mathbb{R}^{2d}\times\mathbb{R}^{2m(d)}$, in the sense that $V$ is neither pre-collisional nor post-collisional?
\end{enumerate}
%
%\begin{figure}
%\begin{center}
%
%\includegraphics[scale=1]{jazzy.png}
%
%\vspace{2mm}
%
%Figure I: {\em Two sets described by a collision configuration $X\in\mathcal{P}_{3, 3}(\mathsf{P}_{\ast})$.}
%\end{center}
%\end{figure}
In the case of hard spheres in $\mathbb{R}^{3}$ -- or, indeed, hard disks in $\mathbb{R}^{2}$ -- the answers to both questions are well known. For instance, when $d=3$ and one takes $\mathsf{P}_{\ast}$ to be the closed ball of radius $\frac{1}{2}$ with centre at the origin in $\mathbb{R}^{3}$, the set of all velocity vectors $\Sigma_{X}^{-}\subset\mathbb{R}^{6}$ which are pre-collisional with respect to a collision configuration
\begin{equation*}
X=\left[
\begin{array}{c}
x_{0} \\
x_{0}+n
\end{array}
\right]\in\partial\mathcal{P}_{3}(\mathsf{P}_{\ast})
\end{equation*}
for some $x_{0}\in\mathbb{R}^{3}$ and $n\in\mathbb{S}^{2}$ is the half space
\begin{equation*}
\Sigma_{X}^{-}:=\left\{
V\in\mathbb{R}^{6}\,:\,\widehat{\nu}_{n}\cdot V \leq 0
\right\},
\end{equation*}
while the analogous set of post-collisional velocity vectors $\Sigma_{X}^{+}$ is
\begin{equation*}
\Sigma_{X}^{+}:=\left\{
V\in\mathbb{R}^{6}\,:\,\widehat{\nu}_{n}\cdot V \geq 0
\right\},
\end{equation*}
where
\begin{equation*}
\widehat{\nu}_{n}:=\frac{1}{\sqrt{2}}\left[
\begin{array}{c}
n \\
-n
\end{array}
\right].
\end{equation*}
In particular, there are no {\em inadmissible} velocity vectors in $\mathbb{R}^{6}$ for two hard spheres. In order to resolve this velocity decision problem in the case of more general sets, one must define carefully what one means by `pre-collisional' and `post-collisional'. We claim that to answer the problem in a meaningful way, one must specify the regularity class of the dynamics $t\mapsto X(t)$ so as to state precisely in which sense its associated velocity map $t\mapsto V(t)$ exists. In doing so, we find a way of generalising the notion of pre- and post-collisional velocities from vectors in $\mathbb{R}^{2d}\times\mathbb{R}^{2m(d)}$ to {\em velocity germs}.
\begin{rem}
In what follows, we shall see it is the choice of regularity class for the dynamics that is our key to answering {\bf (Q1)}. However, it is rather the study of the {\em geometry} of the set $\mathsf{P}_{\ast}$ which is the key to answering {\bf (Q2)}, particularly in the affirmative. We also address the (admittedly, much harder) question on how the geometry of the sets affects the minimal possible regularity class for the dynamics $t\mapsto X(t)$.
\end{rem}
\subsection{A Recent Observation on `Natural' Scattering Operators}
This investigation was prompted by a recent result of \textsc{Palffy-Muhoray, Virga, Wilkinson and Zheng}. Among other results, in \cite{palffy2017paradox} it was shown that when $\mathsf{P}_{\ast}$ is taken to be an ellipsoid, {\bf (Q2)} above can be answered in the affirmative for $\mathcal{P}_{3}(\mathsf{P}_{\ast})$ when the boundary conditions on $\partial\mathcal{P}_{3}(\mathsf{P}_{\ast})$ are taken to be those furnished by the classical Boltzmann scattering $S:\mathcal{P}_{3}(\mathsf{P}_{\ast})\times\mathbb{R}^{12}\rightarrow\mathbb{R}^{12}$, namely
\begin{equation*}
S(X, V):=[v_{X}', \ov{v}_{X}', \omega_{X}', \ov{\omega}_{X}'],
\end{equation*}
in which case the `post-collisional' linear velocities given by
\begin{equation*}
\begin{array}{c}
v_{X}':=v-m^{-1}\Lambda_{X}^{-1}[(v_{P}-v_{Q})\cdot n_{X}]n_{X}, \vspace{1mm}\\
\ov{v}_{X}':=\ov{v}+m^{-1}\Lambda_{X}^{-1}[(v_{P}-v_{Q})\cdot n_{X}]n_{X},
\end{array}
\end{equation*}
and the `post-collisional' angular velocities given by
\begin{equation*}
\begin{array}{c}
\omega_{X}':=\omega-\Lambda_{X}^{-1}[(v_{P}-v_{Q})\cdot n_{X}]J(p_{X}\wedge n_{X}),\vspace{1mm}\\
\ov{\omega}_{X}'=\ov{\omega}+\Lambda_{X}^{-1}[(v_{P}-v_{Q})\cdot n_{X}]J(q_{X}\wedge n_{X}),
\end{array}
\end{equation*}
where the particle mass is 
\begin{equation*}
m:=\int_{\mathsf{P}_{\ast}}\,dy
\end{equation*}
and the inertia tensor is
\begin{equation*}
J:=\int_{\mathsf{P}_{\ast}}\left(|y|^{2}I-y\otimes y\right)\,dy.
\end{equation*}
In addition,
\begin{equation*}
v_{P}:=v+\omega\wedge p_{X}, \quad v_{Q}:=\ov{v}+\ov{\omega}\wedge q_{X},
\end{equation*}
and
\begin{equation*}
\Lambda_{X}:=\frac{1}{2}\left(\frac{2}{m}+\left|\sqrt{J}p_{X}\wedge n_{X}\right|^{2}+\left|\sqrt{J}q_{X}\wedge n_{X}\right|^{2}\right),
\end{equation*}
while the spatial quantities $p_{X}, q_{X}\in\mathbb{R}^{3}$ and $n_{X}\in\mathbb{S}^{2}$ in the above are defined in section \ref{parz} below. The authors show there exist collision configurations $X\in\partial\mathcal{P}_{3}(\mathsf{P}_{\ast})$ and velocity vectors in the associated set of so-called grazing velocities 
\begin{equation}
\Sigma_{X}^{0}:=\left\{
V\in\mathbb{R}^{12}\,:\,S(X, V)=V
\right\}
\end{equation}
for which $\mathscr{L}_{3}(\mathsf{P}(t)\cap\ov{\mathsf{P}}(t))=0$ for $t\leq \tau$, but $\mathscr{L}_{3}(\mathsf{P}(t)\cap\ov{\mathsf{P}}(t))>0$ for all $t>\tau$ in a sufficiently small right neighbourhood of $\tau$, where $\tau\in\mathbb{R}$ is any collision time. They also locate collision configurations which give rise to interpenetration for {\em both} $t<\tau$ {\em and} $t>\tau$. As such, the boundary conditions for the initial boundary-value problem furnished by the classical Boltzmann scattering $S$ can lead to interpenetration of the non-spherical sets $\mathsf{P}(t)$ and $\ov{\mathsf{P}}(t)$. The results in this work offer a rigorous elucidation of these observations.
\subsection{Statement of Main Result}
For notational simplicity only, we shall work with the case $d=2$ in all that follows. As mentioned above, in this article we adopt the viewpoint that pre- and post-collisional velocities (with respect to a given collision configuration $X\in \partial\mathcal{P}_{2}(\mathsf{P}_{\ast})$) are not best viewed as subsets of $\mathbb{R}^{6}$, but rather as subsets $\Sigma_{X}^{\Box}(\mathsf{P}_{\ast}; \mathcal{X})$ of a quotient vector space $G(\mathsf{P}_{\ast}; \mathcal{X})$ of germs of $L^{1}_{\mathrm{loc}}(\mathbb{R}, \mathbb{R}^{6})$ maps, where $\mathcal{X}$ is a subspace of $L^{1}_{\mathrm{loc}}(\mathbb{R}, \mathbb{R}^{6})$ that fixes the regularity of the dynamics $t\mapsto X(t)$, and $\Box$ is a label that assumes the values
\begin{equation}
\Box=\left\{
\begin{array}{ll}
- & \quad \text{for pre-collisional velocity germs}, \vspace{2mm}\\
+ & \quad \text{for post-collisional velocity germs}, \vspace{2mm}\\
0 & \quad \text{for grazing velocity germs}, \vspace{2mm}\\
\times & \quad \text{for inadmissible velocity germs}. \vspace{2mm}\\
\end{array}
\right.
\end{equation}
The definition of these sets appears in section \ref{smoothy} below. We remark in passing that when the dynamics under study is sufficiently regular, the sets $\Sigma_{X}^{\Box}(\mathsf{P}_{\ast}; \mathcal{X})$ can be shown to be bijectively equivalent to subsets of $\mathbb{R}^{6}$. Let us now specify the class of sets we consider that model the dynamics of gas particles.
%Before we state our main result, we must establish some notation. In what follows, we write $\Sigma_{\beta}^{\times}(\mathsf{P}_{\ast})$ to denote the set of all inadmissible velocities with respect to a collision configuration $\beta$. The definition of $\Sigma_{\beta}^{\times}(\mathsf{P}_{\ast})$ -- along with the definition of associated sets of pre-collisional and post-collisional velocities $\Sigma_{\beta}^{-}(\mathsf{P}_{\ast})$ and $\Sigma_{\beta}^{+}(\mathsf{P}_{\ast})$, respectively -- appears in section below. We write
%\begin{equation}
%\mathcal{G}^{\Box}(\mathsf{P}_{\ast}; \mathcal{X}):=\left\{G_{\beta}^{\Box}(\mathsf{P}_{\ast}; \mathcal{X})\right\}_{\beta\in\mathbb{T}^{2}}
%\end{equation}
%to denote the family of all velocities, parameterised by the set of all collision configurations.
\begin{defn}
Let $\mathcal{C}$ denote the class of all compact, connected subsets of $\mathbb{R}^{3}$ whose boundaries admit the structure of a real-analytic manifold. We endow $\mathcal{C}$ with the topology generated by the Hausdorff distance on $2^{\mathbb{R}^{d}}\times2^{\mathbb{R}^{3}}$.
\end{defn}
The following is the main result of the paper.
\begin{thm}\label{mainresult}
The class of inadmissible velocities $\mathcal{G}^{\times}(\mathsf{P}_{\ast}; \mathcal{F})$ is unstable with respect to convergence in $\mathcal{C}$, in the sense that there exist convergent sequences of sets $\{\mathsf{P}_{j}\}_{j=1}^{\infty}$ such that
\begin{equation*}
\mathcal{G}^{\times}(\mathsf{P}_{j}; \mathcal{F})\neq\{\varnothing\},
\end{equation*}
but
\begin{equation*}
\mathcal{G}^{\times}(\mathsf{P}_{\ast}; \mathcal{F})=\{\varnothing\},
\end{equation*}
where $\mathsf{P}_{j}\rightarrow \mathsf{P}_{\ast}$ in $\mathcal{C}$ as $j\rightarrow\infty$, and $\mathcal{F}=\mathcal{F}(\mathbb{R}, \mathbb{R}^{6})$ is the subspace of $L^{1}_{\mathrm{loc}}(\mathbb{R}, \mathbb{R}^{6})$ given by
\begin{equation}
\mathcal{F}(\mathbb{R}, \mathbb{R}^{6}):=\left\{W\in L^{1}_{\mathrm{loc}}(\mathbb{R}, \mathbb{R}^{6})\,:\, W=\mathrm{const.} \hspace{2mm}\text{a.e. on}\hspace{2mm}\mathbb{R}\right\}
\end{equation}
\end{thm}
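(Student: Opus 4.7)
The natural candidates are a sequence of ellipses with shrinking eccentricity. Concretely, I would take $\mathsf{P}_{\ast}$ to be the closed unit disk in $\mathbb{R}^{2}$ (viewed in $\mathbb{R}^{3}$ via the obvious embedding), and take $\mathsf{P}_{j}$ to be the closed ellipse $\{y\in\mathbb{R}^{2}\,:\,(1+\tfrac{1}{j})^{-2}y_{1}^{2}+(1-\tfrac{1}{j})^{-2}y_{2}^{2}\leq 1\}$. Each $\mathsf{P}_{j}$ and $\mathsf{P}_{\ast}$ lies in $\mathcal{C}$, and clearly $\mathsf{P}_{j}\rightarrow\mathsf{P}_{\ast}$ in Hausdorff distance. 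The two ingredients then required are (i) the \emph{emptiness} of $\mathcal{G}^{\times}(\mathsf{P}_{\ast};\mathcal{F})$ and (ii) the \emph{non-emptiness} of $\mathcal{G}^{\times}(\mathsf{P}_{j};\mathcal{F})$ for every sufficiently large $j$.

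\noindent\textbf{Step 1: emptiness for the disk.} For $\mathsf{P}_{\ast}$ a disk, any collision configuration $X\in\partial\mathcal{P}_{2}(\mathsf{P}_{\ast})$ has a single contact point $c_{X}$, and the outward normals of the two disks at $c_{X}$ are antiparallel and aligned with $\ov{x}-x$. The frictionless scattering map $S$ then reduces to the classical hard-disk Boltzmann scattering, and a constant velocity vector $V=[v,\ov{v},\omega,\ov{\omega}]$ is pre-collisional precisely when $(v-\ov{v})\cdot n_{X}\leq 0$ and post-collisional when $(v-\ov{v})\cdot n_{X}\geq 0$. Since these two half-spaces cover $\mathbb{R}^{6}$, every constant velocity germ is admissible; hence $\mathcal{G}^{\times}(\mathsf{P}_{\ast};\mathcal{F})=\{\varnothing\}$. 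This step is standard once one unpacks the definitions of Section \ref{smoothy}.

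\noindent\textbf{Step 2: non-emptiness for each ellipse.} For each $\mathsf{P}_{j}$ I would exhibit a collision configuration $X_{j}\in\partial\mathcal{P}_{2}(\mathsf{P}_{j})$ and a grazing velocity $V_{j}\in\Sigma_{X_{j}}^{0}$ (so that $S(X_{j},V_{j})=V_{j}$, and the unique free-streaming extension is $t\mapsto V_{j}$) such that under the constant-velocity evolution determined by $V_{j}$, the sets $\mathsf{P}_{j}(t)$ and $\ov{\mathsf{P}}_{j}(t)$ interpenetrate in both a left and a right neighbourhood of the collision time. The construction is the planar, reduced-eccentricity analogue of the interpenetration phenomenon of \textsc{Palffy-Muhoray, Virga, Wilkinson and Zheng} \cite{palffy2017paradox}: I would align the two ellipses in an off-axis contact in which the common tangent at the contact point is not perpendicular to the line joining the centres, pick angular velocities so that the contact point is instantaneously stationary (the grazing condition), and exploit the fact that the second-order geometry of the contact under constant rotations drives the two boundaries to overlap on both sides of $\tau$. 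Since $\mathsf{P}_{\ast}$ has rotational symmetry, this second-order asymmetric obstruction vanishes exactly in the limit $j\to\infty$, giving the instability claimed.

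\noindent\textbf{Main obstacle.} The serious step is the verification in Step 2 that the free-streaming continuation of $(X_{j},V_{j})$ fails to avoid overlap on \emph{both} sides of $\tau$ — this is what promotes $V_{j}$ from merely ``grazing'' to inadmissible inside $\mathcal{F}$, because $\mathcal{F}$ permits no reassignment of the velocity away from the collision instant. The computation amounts to a second-order Taylor expansion of the signed distance $t\mapsto\mathrm{dist}(\mathsf{P}_{j}(t),\mathbb{R}^{2}\setminus\ov{\mathsf{P}}_{j}(t))$ at $t=\tau$, controlled by the contact-point curvatures of the two ellipses and by the angular velocities; one must check that the leading non-vanishing coefficient has the wrong sign in both time directions for all large $j$, while degenerating to the correct sign as the ellipses become circular. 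The remaining step, that $X_{j}$ and $V_{j}$ may be chosen to lie in a co-dimension $1$ subset as announced in the introduction, is then a counting of the constraints imposed by the grazing condition and the contact-geometry alignment.
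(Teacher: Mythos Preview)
Your proposal is correct and follows essentially the same route as the paper: the limit set is the disk (for which the second-order coefficient in the expansion of $|x(t)-\ov{x}(t)|^{2}-1$ is automatically non-negative, making every constant velocity admissible), and the approximating sets are ellipses, for which one exhibits a grazing velocity whose second Taylor coefficient forces two-sided interpenetration. The only technical difference is that the paper Taylor-expands not the signed distance but the algebraically explicit quantity $\Psi(t)=F_{0}\bigl(R(\vartheta(t))^{T}(q(t)-x(t))\bigr)$---the defining quadric of one ellipse evaluated at the moving material contact-point of the other---which makes $\Psi''(0)$ computable in closed form and shown to be negative for velocities of the special shape $U=(0,0,\ov{v}_{1},\ov{v}_{2},1,0)$ at suitably chosen $\beta$.
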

Informally, our result essentially says that `infinitesimal perturbations' in $\mathcal{C}$ of the shape of convex sets can dramatically affect what constitute their pre- and post-collisional velocities. The significance of establishing our main result for the subspace $\mathcal{F}(\mathbb{R}, \mathbb{R}^{6})\subset L^{1}_{\mathrm{loc}}(\mathbb{R}, \mathbb{R}^{6})$ is that the associated dynamics $t\mapsto X(t)$ is piecewise linear and thereby constitutes a `natural' class in which to seek weak solutions of the initial boundary-value problem associated to \eqref{masta} for the collisions of hard particles. Indeed, the following is an immediate, and perhaps more readily-appreciable, consequence of theorem \ref{mainresult}:
\begin{cor}
Suppose $\mathsf{P}_{\ast}$ is congruent to an ellipse in $\mathbb{R}^{2}$. Consider the initial boundary-value problem for \eqref{masta} in which the boundary conditions are realised by the classical Boltzmann scattering maps $S:\partial\mathcal{P}_{2}(\mathsf{P}_{\ast})\times\mathbb{R}^{6}\rightarrow\mathbb{R}^{6}$. There exist initial spatial configurations $X_{0}=[x_{0}, \ov{x}_{0}, \vartheta_{0}, \ov{\vartheta}_{0}]\in \partial\mathcal{P}_{2}(\mathsf{P}_{\ast})$, initial velocity vectors $V_{0}=[v_{0}, \ov{v}_{0}, \omega_{0}, \ov{\omega}_{0}]\in\Sigma^{0}_{X_{0}}\subset\mathbb{R}^{6}$, and $\delta=\delta(X_{0})>0$ such that
\begin{equation*}
\mathscr{L}_{2}(\mathsf{P}(t)\cap\ov{\mathsf{P}}(t))>0 \quad \text{for}\hspace{2mm} -\delta<t<0 \quad \text{and}\quad 0<t<\delta,
\end{equation*}
where $t\mapsto X(t)$ is given by
\begin{equation*}
X(t):=\left[
\begin{array}{c}
x_{0}+tv_{0} \\
\ov{x}_{0}+t\ov{v}_{0}\\
\vartheta_{0}+t\omega_{0}\\
\ov{\vartheta}_{0}+t\ov{\omega}_{0}
\end{array}
\right] \quad \text{for}-\delta <t<\delta.
\end{equation*}
\end{cor}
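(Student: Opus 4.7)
The plan is to recognize the corollary as a dictionary translation of Theorem \ref{mainresult}, specialized to an ellipse drawn from the sequence $\{\mathsf{P}_j\}$ that the theorem produces. First, I would apply Theorem \ref{mainresult} to extract some $\mathsf{P}_{j_0}$ with $\mathcal{G}^\times(\mathsf{P}_{j_0}; \mathcal{F}) \neq \{\varnothing\}$; anticipating from the introduction that the $\mathsf{P}_j$ constructed in the proof are in fact non-circular ellipses, and using that the scattering theory of congruent ellipses is equivariant under isometries of $\mathbb{R}^2$, non-emptiness of the inadmissible class transfers to any congruent copy. This delivers a collision configuration $X_0 \in \partial\mathcal{P}_2(\mathsf{P}_\ast)$ and a germ $V \in \mathcal{G}^\times(\mathsf{P}_\ast; \mathcal{F})$.

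Second, I would unpack the regularity class $\mathcal{F}$. Its elements are the constant $L^1_{\mathrm{loc}}$-maps, so $V$ is represented by a single vector $V_0 = [v_0, \ov{v}_0, \omega_0, \ov{\omega}_0] \in \mathbb{R}^6$, and the trajectory is the piecewise affine map $X(t) = X_0 + tV_0$ in the sense of the corollary. Using the definition of $\mathcal{G}^\times$ given in section \ref{smoothy}, the condition ``neither pre-collisional nor post-collisional'' translates, within the affine class $\mathcal{F}$, into the statement that $X(t) \notin \mathcal{P}_2(\mathsf{P}_\ast)$ on a two-sided punctured neighborhood of $t = 0$. Since two strictly convex sets leave $\mathcal{P}_2(\mathsf{P}_\ast)$ precisely by acquiring positive overlap measure, this reads $\mathscr{L}_2(\mathsf{P}(t) \cap \ov{\mathsf{P}}(t)) > 0$ on both sides of zero, and picking $\delta > 0$ inside this neighborhood yields the interpenetration conclusion.

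Third, I would verify that $V_0 \in \Sigma^0_{X_0}$. The quickest route is by elimination: if $S(X_0, V_0) \neq V_0$, then the explicit formula recalled above shows the relative normal velocity $(v_P - v_Q) \cdot n_{X_0}$ is nonzero and $V_0$ is either strictly approaching or strictly separating at $X_0$, hence pre- or post-collisional, contradicting $V_0 \in \mathcal{G}^\times$. Therefore $S(X_0, V_0) = V_0$. The principal obstacle in this plan is the second step: one must confirm that the germ-theoretic inadmissibility recorded by $\mathcal{G}^\times$ faithfully records two-sided interpenetration of the affine trajectory, which amounts to checking that the definitions in section \ref{smoothy} are set up so that, in the special case of $\mathcal{F}$, this translation is tautological.
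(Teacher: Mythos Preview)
Your proposal is correct and matches the paper's treatment: the corollary is stated as an immediate consequence of Theorem~\ref{mainresult}, and your three steps (extract an ellipse with $\mathcal{G}^\times \neq \{\varnothing\}$, unpack the constant-germ class $\mathcal{F}$ into an affine trajectory with two-sided overlap, and place $V_0$ in $\Sigma^0_{X_0}$) are exactly the dictionary translation the paper has in mind. The only cosmetic difference is that the paper's proof of Theorem~\ref{mainell} constructs the inadmissible velocity \emph{directly} inside the hyperplane $\{\nu_\beta^\varepsilon \cdot U = 0\} = \Sigma^0_\beta$ (Case~II and Lemma~\ref{heat}), so membership in $\Sigma^0_{X_0}$ is built in rather than recovered by your contrapositive argument via Case~I; both routes are valid.
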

As such, the initial boundary value problem for hard ellipses is manifestly very different to that for hard disks. We discuss the ramifications of theorem \ref{mainresult} for the existence of `rough' solutions of \eqref{masta} briefly in the final section \ref{closrem}.
\section{Pre- and Post-collisional Velocities} In order to understand how to assign boundary conditions for a dynamics $t\mapsto X(t)$ on $\mathcal{P}_{2}(\mathsf{P}_{\ast})$ in the greatest possible generality, let us now parameterise $\partial\mathcal{P}_{2}(\mathsf{P}_{\ast})$ in a convenient manner.
\subsection{Parameterisation of Collision Configurations}\label{parz}
As $d=2$ in all the sequel, it will be helpful to identify any member $R\in\mathrm{SO}(2)$ with the angle of rotation $\theta\in\mathbb{S}^{1}$ (modulo $2\pi$) to which it gives rise. In particular, $R\in\mathrm{SO}(2)$ if and only if there exists $\theta\in\mathbb{S}^{1}$ such that
\begin{equation}
R=R(\theta)=\left(
\begin{array}{cc}
\cos\theta & -\sin\theta \\
\sin\theta & \cos\theta
\end{array}
\right).
\end{equation}
Without loss of generality, the set $\mathsf{P}_{\ast}$ can be regarded as fixed at the origin. Since the set $\mathsf{P}_{\ast}$ is compact, any collision configuration $X\in\partial\mathcal{P}_{d}(\mathsf{P}_{\ast})$ admits the representation
\begin{equation}\label{boundbeta}
X=\left[
\begin{array}{c}
y \\ 
y+d_{\beta}e(\psi) \\
0 \\
\theta
\end{array}
\right]
\end{equation}
for some $y\in\mathbb{R}^{2}$ and $\beta=(\theta, \psi)\in\mathbb{T}^{2}$, where
\begin{equation}
e(\psi):=\left(
\begin{array}{c}
\cos\psi \\
\sin\psi
\end{array}
\right)
\end{equation}
and $d_{\beta}>0$ is the {\em distance of closest approach} defined by
\begin{equation}
d_{\beta}:=\inf\left\{
d>0\,:\,\mathscr{L}_{2}(\mathsf{P}_{\ast}\cap(R(\theta)\mathsf{P}_{\ast}+de(\psi)))=0
\right\}.
\end{equation}
Thus, if one fixes $y$ to be the origin, a given $X\in\partial\mathcal{P}_{2}(\mathsf{P}_{\ast})$ is characterised by an element of the 2-torus. It will also be helpful in what follows to define the {\em collision vector} $p_{\beta}\in\mathbb{R}^{2}$ given by the unique element of the set
\begin{equation}
\mathsf{P}_{\ast}\cap(R(\theta)\mathsf{P}_{\ast}+d_{\beta}e(\psi)),
\end{equation}
and its associated {\em conjugate collision vector} $q_{\beta}:=d_{\beta}e(\psi)-p_{\beta}$. Finally, we denote by $n_{\beta}\in\mathbb{R}^{2}$ the unique outward-pointing unit normal to the boundary of the set $\mathsf{P}_{\ast}$ at the point $p_{\beta}\in\partial\mathsf{P}_{\ast}$.
\subsection{Basic Definitions of Velocity Maps}
Let us begin by defining the most general class of pre-collisional velocities with which we shall work in all the sequel.
\begin{defn}[Regular Pre-collisional Velocity Maps]\label{pre}
Suppose a collision configuration $\beta\in\mathbb{T}^{2}$ is given and fixed. We define the class of all {\bf regular pre-collisional velocity maps $\widetilde{\Sigma}_{\beta}^{-}(\mathsf{P}_{\ast})$ with respect to} $\beta$ to be the set of locally-integrable maps
\begin{equation*}
\widetilde{\Sigma}_{\beta}^{-}(\mathsf{P}_{\ast}):=\left\{
V\in L^{1}_{\mathrm{loc}}(\mathbb{R}, \mathbb{R}^{6})\,:\,\mathscr{L}_{2}(\mathsf{P}(t)\cap\ov{\mathsf{P}}(t))=0\hspace{1mm}\hspace{1mm}\text{for all}\hspace{2mm} t\leq 0.
\right\},
\end{equation*}
where the set evolutions $t\mapsto\mathsf{P}(t)$ and $t\mapsto\ov{\mathsf{P}}(t)$ are given by \eqref{setevo} above, and the dynamics $t\mapsto X(t)$ is defined componentwise by
\begin{equation}\label{com}
x(t):=\int_{0}^{t}v(s)\,ds \quad \text{and}\quad \ov{x}(t):=d_{\beta}e(\psi)+\int_{0}^{t}\ov{v}(s)\,ds
\end{equation}
and also
\begin{equation}\label{ori}
\vartheta(t):=\vartheta+\int_{0}^{t}\omega(s)\,ds \quad \text{and}\quad \ov{\vartheta}(t):=\ov{\vartheta}+\int_{0}^{t}\ov{\omega}(s)\,ds.
\end{equation}
\end{defn}
One notable feature of our definition of pre-collisional velocities is that one cannot speak meaningfully of the pointwise value in $\mathbb{R}^{6}$ of the velocity map $V(\tau)$ at the collision time $\tau=0$. As such, in our present framework the decision problem in section \ref{decision} has no immediate sense, and this na\"{i}ve approach must be superseded by one more sophisticated. Indeed, we spend our subsequent efforts in the refinements of {\bf (Q1)} and {\bf (Q2)} in sections below.
\begin{rem}\label{browny}
As general as its definition might seem, $\widetilde{\Sigma}_{\beta}^{-}(\mathsf{P}_{\ast})$ does not contain those velocities $V$ whose associated spatial maps are sample paths of a \emph{brownian motion} on $\mathcal{P}_{2}(\mathsf{P}_{\ast})$. This is a `large' class of motions on $\mathcal{P}_{2}(\mathsf{P}_{\ast})$ that one might deem to be natural. The reason we exclude such velocities in this work is that they cannot be localised pointwise in time. Indeed, this will be of importance to us when we come to employ function germs in section \ref{germs} below.

This lack of localisability is readily seen. Indeed, let $\Xi$ be a non-empty set, $\mathcal{S}\subseteq 2^{\Xi}$ be a sigma algebra, and $\mathbb{P}$ a measure on $\mathcal{S}$. For the given measure space $(\Xi, \mathcal{S}, \mathds{P})$, suppose one is given\footnote{Suppose that $N\gg 1$, and that $\{\mathsf{P}_{j}\}_{j=1}^{N}$ is a family of congruent ellipsoids. To our knowledge, no explicit construction of a $\mathcal{P}_{d}(\mathsf{P}_{1}, ..., \mathsf{P}_{N})$-valued Brownian motion on a measure space $(\Xi, \mathcal{S}, \mathds{P})$ has been performed in the literature, when $\mathsf{P}_{i}$ are only of class $C^{0}$. As it is related to this issue, the reader might wish to consider the notes of \textsc{Varadhan} \cite{vara} for the construction of reflected brownian motions in certain geometrically-simple domains.} a brownian motion $\{W(\cdot, t)\}_{t\in\mathbb{R}}$ thereon, for which $W(\xi, \cdot):\mathbb{R}\rightarrow\mathcal{P}_{2}(\mathsf{P}_{\ast})$ for $\xi\in\Xi$. It is well known (see, for instance, \textsc{Evans} \cite{evans2013introduction} for the construction of a Brownian motion which is suitable for collision-free dynamics) that for $\mathbb{P}$-almost every $\xi\in\Omega$, the sample path $t\mapsto W(\xi, t)$ is of class $L^{1}_{\mathrm{loc}}(\mathbb{R}, \mathcal{P}_{2}(\mathsf{P}_{\ast}))$ but is {\em not} of bounded variation on $\mathbb{R}$. As such, its distributional derivative $V_{\xi}:= DW(\xi, \cdot)$ is not even an $\mathbb{R}^{6}$-valued Borel measure on $\mathbb{R}$. Manifestly, one cannot speak of the pointwise value of $V_{\xi}$ almost everywhere on $\mathbb{R}$. This property of localisability is important when we come to formalise velocities as germs.
\end{rem}
With this brief discussion on the reasonableness of definition \ref{pre} of pre-collisional velocities for two rigid sets congruent to $\mathsf{P}_{\ast}$ at collision in place, let us set out the remaining classes of velocities we employ in this work.
\begin{defn}[Regular Post-collisional Velocity Maps]
Suppose a collision configuration $\beta\in\mathbb{T}^{2}$ is given and fixed. We define the class of all {\bf regular post-collisional velocity maps $\widetilde{\Sigma}_{\beta}^{-}(\mathsf{P}_{\ast})$ with respect to} $\beta$ to be 
\begin{equation*}
\widetilde{\Sigma}_{\beta}^{+}(\mathsf{P}_{\ast}):=\left\{
V\in L^{1}_{\mathrm{loc}}(\mathbb{R}, \mathbb{R}^{6})\,:\,\mathscr{L}_{2}(\mathsf{P}(t)\cap\ov{\mathsf{P}}(t))=0\hspace{1mm}\hspace{1mm}\text{for all}\hspace{2mm} t\geq 0.
\right\},
\end{equation*}
\end{defn}
If an external body force acts on the sets $\mathsf{P}$ and $\ov{\mathsf{P}}$, or their boundaries are endowed with additional structure, it is possible for two such sets to admit complex behaviour in the neighbourhood of a collision time $\tau$, including so-called {\em grazing motions} or {\em sticky motions}; we refer the reader to \textsc{Bressan and Nguyen} \cite{bressan2013non} for a discussion on such boundary conditions. In particular, the sets $\mathsf{P}(t)$ and $\ov{\mathsf{P}}(t)$ may remain in contact with one another for an arbitrarily-long time following an initial collision at $t=\tau$. For this reason, we establish the following definition.
\begin{defn}[Regular Grazing Velocity Maps]
Suppose a collision configuration $\beta\in\mathbb{T}^{2}$ is given and fixed. We define the class of all {\bf regular grazing velocity maps $\widetilde{\Sigma}_{\beta}^{0}(\mathsf{P}_{\ast})$ with respect to} $\beta$ to be
\begin{equation*}
\widetilde{\Sigma}_{\beta}^{0}(\mathsf{P}_{\ast}):=\left\{
V\in L^{1}_{\mathrm{loc}}(\mathbb{R}, \mathbb{R}^{6})\,:\,\mathscr{L}_{2}(\mathsf{P}(t)\cap\ov{\mathsf{P}}(t))=0\hspace{1mm}\hspace{1mm}\text{for all}\hspace{2mm} t\in\mathbb{R}
\right\}.
\end{equation*}
\end{defn}
Clearly, one has that $\widetilde{\Sigma}_{\beta}^{0}(\mathsf{P}_{\ast})=\widetilde{\Sigma}_{\beta}^{-}(\mathsf{P}_{\ast})\cap \widetilde{\Sigma}_{\beta}^{+}(\mathsf{P}_{\ast})$. Finally, with question {\bf (Q2)} above in mind, we are motivated to define the following class of velocities.
\begin{defn}[Regular Inadmissible Velocity Maps]
Suppose a collision configuration $\beta\in\mathbb{T}^{2}$ is given and fixed. We define the class of all {\bf regular inadmissible velocity maps $\widetilde{\Sigma}_{\beta}^{\times}(\mathsf{P}_{\ast})$ with respect to} $\beta$ to be
\begin{equation*}
\widetilde{\Sigma}_{\beta}^{\times}(\mathsf{P}_{\ast}):=\left\{
V\in L^{1}_{\mathrm{loc}}(\mathbb{R}, \mathbb{R}^{6})\,:\,\exists\,\mathcal{N}\subset\mathbb{R}\hspace{2mm}\text{s.t.} \hspace{2mm}\mathscr{L}_{2}(\mathsf{P}(t)\cap\ov{\mathsf{P}}(t))>0\hspace{2mm}\text{for}\hspace{1mm} t\in\mathcal{N}
\right\},
\end{equation*}
where $\mathcal{N}\subset\mathbb{R}$ ranges over the class of open intervals containing the origin.
\end{defn}
With the above definitions in place, we record the following simple observation.
\begin{lem}
For any $\beta\in\mathbb{T}^{2}$, one has that
\begin{equation*}
L^{1}_{\mathrm{loc}}(\mathbb{R}, \mathbb{R}^{6})=\bigcup_{\Box\in\{-, +, 0, \times\}}\widetilde{\Sigma}_{\beta}^{\Box}(\mathsf{P}_{\ast}).
\end{equation*}
\end{lem}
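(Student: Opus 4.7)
The plan is a direct case analysis on the overlap function. Given any $V\in L^{1}_{\mathrm{loc}}(\mathbb{R},\mathbb{R}^{6})$, the formulas \eqref{com}--\eqref{ori} define an absolutely continuous dynamics $t\mapsto X(t)$ which, via \eqref{setevo}, produces continuous (in the Hausdorff sense) set evolutions $t\mapsto\mathsf{P}(t)$ and $t\mapsto\ov{\mathsf{P}}(t)$. I would introduce the scalar overlap function
\[
f_{V}(t):=\mathscr{L}_{2}\bigl(\mathsf{P}(t)\cap\ov{\mathsf{P}}(t)\bigr),\qquad t\in\mathbb{R},
\]
and observe that $f_{V}(0)=0$, since $X(0)\in\partial\mathcal{P}_{2}(\mathsf{P}_{\ast})$ by the very construction of the initial data and the defining property of the distance of closest approach $d_{\beta}$ from Section \ref{parz}.

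Next I would split into four mutually exhaustive cases according to whether $f_{V}$ vanishes identically on each of the half-lines $(-\infty,0]$ and $[0,\infty)$. If $f_{V}\equiv 0$ on $(-\infty,0]$ then $V\in\widetilde{\Sigma}_{\beta}^{-}(\mathsf{P}_{\ast})$ directly from the definition of the pre-collisional class, and symmetrically if $f_{V}\equiv 0$ on $[0,\infty)$ then $V\in\widetilde{\Sigma}_{\beta}^{+}(\mathsf{P}_{\ast})$; if both identities hold then $V\in\widetilde{\Sigma}_{\beta}^{0}(\mathsf{P}_{\ast})$. The only remaining possibility is that there exist $t_{1}<0<t_{2}$ with $f_{V}(t_{1}),f_{V}(t_{2})>0$, and in this case the conclusion I would draw is $V\in\widetilde{\Sigma}_{\beta}^{\times}(\mathsf{P}_{\ast})$.

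The only step requiring actual work is the last one: producing the open interval $\mathcal{N}$ witnessing membership in $\widetilde{\Sigma}_{\beta}^{\times}(\mathsf{P}_{\ast})$. Because each intersection $\mathsf{P}(t_{j})\cap\ov{\mathsf{P}}(t_{j})$ has positive planar Lebesgue measure, it has non-empty interior, and the Hausdorff-continuity of the two evolutions propagates this interior-overlap to an open neighborhood $I_{j}\ni t_{j}$ throughout which $f_{V}>0$. The main (mild) obstacle is that $f_{V}(0)=0$, so no symmetric two-sided neighborhood of $0$ can serve as $\mathcal{N}$; the interval must therefore be selected as a one-sided open interval with $0\in\overline{\mathcal{N}}$, in accordance with the intended reading of the defining quantifier in $\widetilde{\Sigma}_{\beta}^{\times}(\mathsf{P}_{\ast})$. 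This single soft-analysis observation--propagation of interior overlap under Hausdorff perturbation--is where essentially all of the (minor) substance of the proof resides.
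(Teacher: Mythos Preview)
The paper does not prove this lemma; it is recorded as a ``simple observation'' with no argument supplied. Your case split on the vanishing of $f_V$ on the two half-lines is exactly the right organising principle, and for the classes $-$, $+$ and $0$ the verification is immediate from the definitions, as you say.

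The gap is in your handling of the fourth case. You correctly locate times $t_1<0<t_2$ with $f_V(t_j)>0$, and your Hausdorff-continuity argument correctly yields open intervals $I_j\ni t_j$ on which $f_V>0$. But nothing forces either $I_j$ to abut $0$: for a general $V\in L^{1}_{\mathrm{loc}}$ one may perfectly well have $f_V>0$ only on, say, $(-2,-1)\cup(1,2)$ and $f_V\equiv 0$ elsewhere. Your proposed remedy---``select a one-sided open interval with $0\in\overline{\mathcal{N}}$''---cannot be manufactured from the data you have assembled; the continuity step delivers neighbourhoods of $t_1$ and $t_2$, not of $0$.

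The underlying difficulty is definitional rather than analytic. Read literally (an open interval containing $0$ on which $f_V>0$ throughout), the class $\widetilde{\Sigma}_\beta^\times$ is empty, since $f_V(0)=0$ always; and then the lemma is actually false, witnessed by any $V$ of the type just described. The lemma becomes the tautology it is evidently intended to be once $\widetilde{\Sigma}_\beta^\times$ is read as the complement of $\widetilde{\Sigma}_\beta^- \cup \widetilde{\Sigma}_\beta^+$ in $L^{1}_{\mathrm{loc}}$, i.e.\ ``overlap occurs somewhere on each side of $0$''. Under that reading your four-case dichotomy is already a complete proof and no interval needs to be produced at all; the continuity argument is then superfluous. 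I would recommend simply flagging the imprecision in the definition and invoking the complement reading, rather than attempting to verify a condition that cannot hold as written.
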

As the above definitions stand, for our purposes a given $V\in L^{1}_{\mathrm{loc}}(\mathbb{R}, \mathbb{R}^{6})$ contains `too much' information about a collision at $\tau=0$, in the sense that we are only interested in the behaviour of the maps \eqref{com} in a neighbourhood of the collision time, not on the whole real line. Such redundant information in $L^{1}_{\mathrm{loc}}(\mathbb{R}, \mathbb{R}^{6})$ is conveniently factored out by employing the notion of germ. 
\subsection{Germs of Velocities}\label{germs}
To fix ideas, we focus on the class of regular pre-collisional velocity maps. We note that although a given $V\in\widetilde{\Sigma}_{\beta}^{-}(\mathsf{P}_{\ast})$ is defined globally (almost everywhere) on the negative half-line, for the purposes of describing a collision between $\mathsf{P}(t)$ and $\ov{\mathsf{P}}(t)$ at $t=0$ we are only truly interested in the local behaviour of a velocity map $V$ in a left-neighbourhood of 0. Indeed, we would like to understand two given velocities $V, W\in L^{1}_{\mathrm{loc}}(\mathbb{R}, \mathbb{R}^{6})$ as giving rise to the same object if and only if their restrictions to some left-neighbourhood of 0 coincide. The concept of {\em germ} is appropriate here: indeed, see \textsc{Warner} \cite{warner2013foundations} for the definition of germ when $V, W\in C^{\infty}(\mathbb{R}, \mathbb{R}^{6})$. We recall the following for the convenience of the reader.
\begin{defn}[Left-germs and Right-germs of $L^{1}_{\mathrm{loc}}(\mathbb{R}, \mathbb{R}^{6})$ Maps]\label{lrgerms}
For a given $V\in L^{1}_{\mathrm{loc}}(\mathbb{R}, \mathbb{R}^{6})$ and $\tau\in\mathbb{R}$, we write $[V]_{\tau}^{-}$ to denote the class of maps
\begin{equation*}
[V]_{\tau}^{-}:=\left\{
W\in L^{1}_{\mathrm{loc}}(\mathbb{R}, \mathbb{R}^{6})\,:\,\exists \mathcal{N}^{-}_{\tau}\subset\mathbb{R}\hspace{2mm}\text{s.t.}\hspace{2mm} W|_{\mathcal{N}^{-}}=V|_{\mathcal{N}^{-}}\hspace{2mm}\text{in}\hspace{2mm}L^{1}(\mathcal{N}^{-}, \mathbb{R}^{6})
\right\},
\end{equation*}
where $\mathcal{N}^{-}_{\tau}=(\tau-\delta, \tau]$ for some $\delta>0$. We call $[V]_{\tau}^{-}$ the {\bf left-germ} at $\tau$ generated by $V$, and write
\begin{equation*}
G^{-}[\tau]:=\left\{
[V]_{\tau}^{-}\,:\,V\in L^{1}_{\mathrm{loc}}(\mathbb{R}, \mathbb{R}^{6})
\right\}.
\end{equation*} Similarly, we write $[V]_{\tau}^{+}$ to denote
\begin{equation*}
[V]_{\tau}^{+}:=\left\{
W\in L^{1}_{\mathrm{loc}}(\mathbb{R}, \mathbb{R}^{6})\,:\,\exists \mathcal{N}^{+}_{\tau}\subset\mathbb{R}\hspace{2mm}\text{s.t.}\hspace{2mm} W|_{\mathcal{N}^{+}}=V|_{\mathcal{N}^{+}}\hspace{2mm}\text{in}\hspace{2mm}L^{1}(\mathcal{N}^{+}, \mathbb{R}^{6})
\right\},
\end{equation*}
where $\mathcal{N}^{+}_{\tau}=[\tau, \tau+\delta)$ for some $\delta>0$. We call $[V]_{\tau}^{-}$ the {\bf right-germ} at $\tau$ generated by $V$, and write
\begin{equation*}
G^{+}[\tau]:=\left\{
[V]_{\tau}^{+}\,:\,V\in L^{1}_{\mathrm{loc}}(\mathbb{R}, \mathbb{R}^{6})
\right\}.
\end{equation*}
\end{defn}
In order to understand grazing and inadmissible velocities, we also establish the following definition.
\begin{defn}[Germs of $L^{1}_{\mathrm{loc}}(\mathbb{R}, \mathbb{R}^{6})$ Maps]\label{germs}
For a given $V\in L^{1}_{\mathrm{loc}}(\mathbb{R}, \mathbb{R}^{6})$ and $\tau\in\mathbb{R}$, we write $[V]_{\tau}$ to denote the class of maps
\begin{equation*}
[V]_{\tau}:=\left\{
W\in L^{1}_{\mathrm{loc}}(\mathbb{R}, \mathbb{R}^{6})\,:\,\exists \mathcal{N}_{\tau}\subset\mathbb{R}\hspace{2mm}\text{s.t.}\hspace{2mm} W|_{\mathcal{N}}=V|_{\mathcal{N}}\hspace{2mm}\text{in}\hspace{2mm}L^{1}(\mathcal{N}, \mathbb{R}^{6})
\right\},
\end{equation*}
where $\mathcal{N}_{\tau}=(\tau-\delta, \tau+\delta)$ for some $\delta>0$. We call $[V]_{\tau}$ the {\bf germ} at $\tau$ generated by $V$, and denote
\begin{equation*}
G[\tau]:=\left\{
[V]_{\tau}\,:\,V\in L^{1}_{\mathrm{loc}}(\mathbb{R}, \mathbb{R}^{6})
\right\}.
\end{equation*}
\end{defn}
We now introduce the spaces of velocity germ with which we work in the sequel.
\begin{defn}[Collision Velocity Germs at 0]
For a given $\beta\in\mathbb{T}^{2}$, we define the following sets of (equivalence classes of) velocities:
\begin{align}
G_{\beta}^{-}(\mathsf{P}_{\ast}):=\left\{
[V]_{0}^{-}\,:\,V\in\widetilde{\Sigma}_{\beta}^{-}(\mathsf{P}_{\ast})
\right\}, \vspace{2mm} \label{pr}\\
G_{\beta}^{+}(\mathsf{P}_{\ast}):=\left\{
[V]_{0}^{+}\,:\,V\in\widetilde{\Sigma}_{\beta}^{+}(\mathsf{P}_{\ast})
\right\}, \vspace{2mm} \label{ps} \\
G_{\beta}^{0}(\mathsf{P}_{\ast}):=\left\{
[V]_{0}\,:\,V\in\widetilde{\Sigma}_{\beta}^{0}(\mathsf{P}_{\ast})
\right\}, \vspace{2mm} \label{pp}\\
G_{\beta}^{\times}(\mathsf{P}_{\ast}):=\left\{
[V]_{0}\,:\,V\in\widetilde{\Sigma}_{\beta}^{\times}(\mathsf{P}_{\ast}) 
\right\}. \label{inad}
\end{align}
\end{defn}
We note that these sets $G^{\Box}_{\beta}(\mathsf{P}_{\ast})$ of collision velocity germs at 0 do not, in general, admit the structure of a real vector space. We shall ultimately find it useful to restrict our attention to vector subspaces of $L^{1}_{\mathrm{loc}}(\mathbb{R}, \mathbb{R}^{6})$ of piecewise smooth velocity maps.
\begin{rem}
We do not take steps to try to generalise the concept of germ from the class of regular distributions $L^{1}_{\mathrm{loc}}(\mathbb{R}, \mathbb{R}^{6})\hookrightarrow\mathcal{D}(\mathbb{R}, \mathbb{R}^{6})'$ to non-regular distributions in $\mathcal{D}(\mathbb{R}, \mathbb{R}^{6})'$. It is for this reason our framework does not include the distributional derivatives of sample paths of brownian motions, for instance.
\end{rem}
\subsection{Scattering Maps and Initial Boundary Value Problems for Set Dynamics}\label{scatteringmaps}
We claimed above that the decision problem of section \ref{decision} is pertinent because, in the study of set dynamics, one is interested in the construction of a map $\sigma_{\beta}: G_{\beta}^{-}(\mathsf{P}_{\ast})\rightarrow G_{\beta}^{+}(\mathsf{P}_{\ast})$ which uniquely assigns a post-collisional velocity to any given pre-collisional velocity. Such a {\bf scattering map} $\sigma_{\beta}$ is an essential ingredient in the construction of global-in-time solutions -- in whichever appropriate sense -- of equations of motion governing $\mathsf{P}(t)$ and $\ov{\mathsf{P}}(t)$ that arise in many domains of interest. Given our interest in kinetic theory, we fix our domain of interest here to those `physical' equations of motion which arise in the field of classical mechanics. In particular, we appeal to Euler's First and Second Laws of motion: see \textsc{Truesdell} \cite{truesdell2016first} for the quasi-axiomatic approach to mechanics which yields these basic equations. 

%Our framework describes the dynamics of sets in the possible presence of a spatially-inhomogeneous external force field $F:\mathcal{P}_{2, s}(\mathsf{P}_{\ast})\times \mathbb{R}\rightarrow\mathbb{R}^{6}$ which is permitted to vary in time.
\subsubsection{An Initial Boundary Value Problem} 
Suppose that $F:\mathbb{R}^{6}\times \mathbb{R}\rightarrow\mathbb{R}^{6}$ is a locally integrable map. We seek a continuous map $X: \mathbb{R}\rightarrow\mathcal{P}_{2}(\mathsf{P}_{\ast})$ satisfying the system
\begin{equation}\label{newtons}
\textbf{(IBVP)}\quad \left\{
\begin{array}{l}
M\ddot{X}(t)=F(X(t), t)\quad \text{for}\hspace{2mm}X(t)\in\mathrm{int}\,\mathcal{P}_{2}(\mathsf{P}_{\ast}), \vspace{2mm} \\
X(0)=X_{0}, \hspace{2mm} \dot{X}(0)=V_{0},
\end{array}
\right.
\end{equation}
where $\mathrm{int}\,\mathcal{P}_{2}(\mathsf{P}_{\ast}):=\mathcal{P}_{2}(\mathsf{P}_{\ast})\setminus \partial\mathcal{P}_{2}(\mathsf{P}_{\ast})$, and 
\begin{align}
M:=\mathrm{diag}(m, m, m, m, J, J)\in\mathbb{R}^{6\times 6},
\end{align}
with Newton's Second Law to be understood in the sense of distributions on $\mathbb{R}$. Firstly, we note that {\bf (IBVP)} is underdetermined in its present form as no boundary conditions for the dynamics on $\partial\mathcal{P}_{2}(\mathsf{P}_{\ast})$ has been provided. Secondly, if $X$ is only supposed to be continuous, one may not be able to understand the initial condition $\dot{X}(0)=V_{0}$ pointwise in the classical sense (c.f. remark \ref{browny}). Thirdly, given that one expects collisions to occur, $\ddot{X}$ should be understood as a vector-valued Radon measure. It is for these reasons that one ought to (i) describe the values of $\dot{X}$ on $\partial\mathcal{P}_{2}(\mathsf{P}_{\ast})$ in terms of velocity germs, and (ii) work with the notion of scattering map in order to endow the above problem with a suitable notion of boundary conditions.
\subsubsection{Scattering Maps}\label{scatty}
To clarify what we mean by a {\em solution} of {\bf (IBVP)} above, we now set out the following definition.
\begin{defn}[Scattering Maps]
Suppose $\beta\in\mathbb{T}^{2}$ is given and fixed. Any injection $\sigma_{\beta}: G_{\beta}^{-}(\mathsf{P}_{\ast})\rightarrow G_{\beta}^{+}(\mathsf{P}_{\ast})$ is said to be a {\bf scattering map} associated to the collision configuration $\beta$. We call the collection $\{\sigma_{\beta}\}_{\beta\in\mathbb{T}^{2}}$ a {\em family of scattering maps}.
\end{defn}
Our main result concerns families of {\em frictionless} scattering maps (a class to which the classical Boltzmann scattering matrices belong).
\begin{defn}[Frictionless Scattering Maps]
We say that a scattering map $\sigma_{\beta}:G_{\beta}^{-}(\mathsf{P}_{\ast})\rightarrow G_{\beta}^{+}(\mathsf{P}_{\ast})$ is {\bf frictionless} if and only if $\sigma_{\beta}|_{G^{0}_{\beta}(\mathsf{P}_{\ast})}=\mathrm{id}$ on $G^{0}_{\beta}(\mathsf{P}_{\ast})$.
\end{defn}
Without specifying any additional criteria, it is clear there are a great many scattering maps $\sigma_{\beta}$ defined on $G_{\beta}^{-}(\mathsf{P}_{\ast})$. In the absence of external forcing ($F\equiv 0$), it is natural that one specify additionally that each $\sigma_{\beta}$ respect the conservation laws of classical mechanics. Specifically, for any $V\in[V]_{0}^{-}$ and $W\in\sigma_{\beta}([V]_{0}^{-})$, there exists $\delta=\delta(V, W)>0$ such that the conservation of linear momentum 
\begin{equation}\label{wcolm}
\int_{-\infty}^{\infty}\widehat{E}_{j}\cdot V(t)\varphi(t)\,dt=\int_{-\infty}^{\infty}\widehat{E}_{j}\cdot W(t)\varphi(t)\,dt, \tag{{\em w}COLM}
\end{equation}
the conservation of angular momentum
\begin{equation}\label{wcoam}
\int_{-\infty}^{\infty}\Gamma_{\beta}(t)\cdot V(t)\varphi(t)\,dt=\int_{-\infty}^{\infty}\Gamma_{\beta}(0)\cdot W(t)\varphi(t)\,dt \tag{{\em w}COAM}
\end{equation}
and the conservation of kinetic energy
\begin{equation}\label{wcoke}
\int_{-\infty}^{\infty}|MV(t)|^{2}\varphi(t)\,dt=\int_{-\infty}^{\infty}|MW(t)|^{2}\varphi(t)\,dt \tag{{\em w}COKE}
\end{equation}
hold for all $\varphi\in C^{\infty}_{c}((-\delta, \delta), \mathbb{R})$, where
\begin{equation}
\widehat{E}_{1}:=\left(
\begin{array}{c}
1 \\
0 \\
1 \\
0 \\
0 \\
0
\end{array}
\right), \quad \widehat{E}_{2}:=\left(
\begin{array}{c}
0 \\
1 \\
0 \\
1 \\
0 \\
0
\end{array}
\right), \quad \Gamma_{\beta}:=\left(
\begin{array}{c}
0 \\
0 \\
-md_{\beta}\sin\psi \\
md_{\beta}\cos\psi \\
J \\
J 
\end{array}
\right),
\end{equation}
and $M\in\mathbb{R}^{6\times 6}$ is the mass-inertia tensor
\begin{equation}
M:=\mathrm{diag}(\sqrt{m}, \sqrt{m}, \sqrt{m}, \sqrt{m}, \sqrt{J}, \sqrt{J}).
\end{equation}
Indeed, we shall call any scattering map $\sigma_{\beta}$ that satisfies \eqref{wcolm}, \eqref{wcoam} and \eqref{wcoke} on $G_{\beta}^{-}(\mathsf{P}_{\ast})$ {\bf physical}. For instance, it can be shown that for each $\beta\in\mathbb{T}^{2}$, the Boltzmann matrices
\begin{equation}\label{bm}
s_{\beta}:=M^{-1}\left(
I-2\widehat{\nu}_{\beta}\otimes \widehat{\nu}_{\beta}
\right)M\in\mathrm{O}(6),
\end{equation}
where
\begin{equation}
\widehat{\nu}_{\beta}:=\frac{1}{\sqrt{\frac{2}{m}++\frac{1}{J}|p_{\beta}^{\perp}\cdot n_{\beta}|^{2}+\frac{1}{J}|q_{\beta}^{\perp}\cdot n_{\beta}|^{2}}}M^{-1}\left[
\begin{array}{c}
-n_{\beta} \\
n_{\beta} \\
-p_{\beta}^{\perp}\cdot n_{\beta} \\
q_{\beta}^{\perp}\cdot n_{\beta}
\end{array}
\right]
\end{equation}
give rise to physical scattering maps on the subspace of $G_{\beta}^{-}(\mathsf{P}_{\ast})$ corresponding to germs generated by those maps which are locally constant at 0, namely $\mathcal{F}(\mathbb{R}, \mathbb{R}^{6})$. This leads us naturally to a discussion on subspaces of $G_{\beta}^{\Box}(\mathsf{P}_{\ast})$ for $\Box\in\{-, +, 0, \times\}$ in section \ref{smoothy} below.
\begin{rem}
While each $G_{\beta}^{\Box}(\mathsf{P}_{\ast})$ is a subset of a real vector space, it is not immediately clear what constitutes a `natural' topology for these spaces of velocities. Indeed, it is necessary to understand $G_{\beta}^{\Box}(\mathsf{P}_{\ast})$ as subsets of topological vector spaces in order to establish continuity -- or perhaps even smoothness, in the case when the topology is norm-induced and Banach -- of scattering maps $\sigma_{\beta}$ defined thereon. Moreover, one would like to be able to understand the map
\begin{equation}
\beta\mapsto \sigma_{\beta}(V) \quad \text{for a fixed}\hspace{2mm}V\in\bigcap_{\beta\in B}G^{-}_{\beta}(\mathsf{P}_{\ast})
\end{equation}
as continuous on those open subsets $B\subseteq\mathbb{T}^{2}$ for which the above intersection is non-empty. This would require one to construct a `natural' topology on the union
\begin{equation}
\bigcup_{\beta\in B}G_{\beta}^{+}(\mathsf{P}_{\ast}).
\end{equation}
In section below, we are able to make sense of all the above when restricting our attention to subspaces of $G_{\beta}^{\Box}(\mathsf{P}_{\ast})$ of germs of {\em smooth} functions.
\end{rem}
\subsubsection{Formal Definition of a Solution to {\bf (IBVP)}}
Let us begin by formalising the definition of the set of {\em collision times} for the dynamics of two sets.
\begin{defn}[Collision Times]
Suppose $X:\mathbb{R}\rightarrow\mathcal{P}_{2}(\mathsf{P}_{\ast})$ is continuous. The set of {\em collision times} $\mathcal{T}(X)\subseteq\mathbb{R}$ associated to $X$ is defined to be 
\begin{equation}
\mathcal{T}(X):=\left\{
t\in\mathbb{R}\,:\,\mathsf{P}(t)\cap\ov{\mathsf{P}}(t)\neq\varnothing
\right\}.
\end{equation}
\end{defn}
We now state precisely what we mean by a solution of the above initial boundary value problem. In what follows, we identify vectors $V_{0}\in\mathbb{R}^{6}$ with the constant maps on $\mathbb{R}$ to which they give rise.
\begin{defn}[Weak Solutions of {\bf (IBVP)}]\label{formaldeffy}
Suppose $\mathsf{P}_{\ast}\in\mathcal{C}$, that $F:\mathbb{R}^{6}\times\mathbb{R}\rightarrow\mathbb{R}^{6}$ is locally integrable, and that $\{\sigma_{\beta}\}_{\beta\in\mathbb{T}^{2}}$ is a family of scattering maps. Let $X_{0}\in \mathcal{P}_{2}(\mathsf{P}_{\ast})$ and
\begin{equation*}
V_{0}\in\left\{
\begin{array}{ll}
G[0]& \quad \text{if}\hspace{2mm}X_{0}\in\mathrm{int}\,\mathcal{P}_{2}(\mathsf{P}_{\ast}),\vspace{2mm}\\
G_{\beta}^{-}(\mathsf{P}_{\ast}) & \quad \text{if}\hspace{2mm}X_{0}\in \partial\mathcal{P}_{2}(\mathsf{P}_{\ast}).
\end{array}
\right.
\end{equation*}
We say that $X\in C^{0}(\mathbb{R}, \mathcal{P}_{2}(\mathsf{P}_{\ast}))$ is a global-in-time weak solution of
\begin{equation}\label{newtonforcey}
M\ddot{X}(t)=F(X(t), t)
\end{equation}
with initial state $Z_{0}:=[X_{0}, V_{0}]$ if and only if $\dot{X}\in \mathrm{BV}_{\mathrm{loc}}(\mathbb{R}, \mathbb{R}^{6})$ and $X$ satisfies
\begin{equation*}
\int_{-\infty}^{\infty}\left(X(t)-F(X(t), t)\right)\cdot\phi''(t)\,dt=\int_{-\infty}^{\infty}\phi(t)\,d\mu(t)
\end{equation*}
for all $\phi\in C^{\infty}_{c}(\mathbb{R}, \mathbb{R}^{6})$, where $\mu$ is a $\mathbb{R}^{6}$-valued Radon measure supported on $\mathcal{T}(X)$. Moreover, for all $\tau\in\mathcal{T}(X)$,
\begin{equation*}
[\dot{X}]_{\tau}^{-}\in G_{\beta(\tau)}^{-}(\mathsf{P}_{\ast}) \quad \text{and}\quad [\dot{X}]_{\tau}^{+}= \sigma_{\beta(\tau)}([\dot{X}]_{\tau}^{-}),
\end{equation*}
where $\beta(\tau)$ is determined by \eqref{boundbeta}. Finally, $X(0)=X_{0}$ and
\begin{equation*}
[\dot{X}]_{0}=V_{0} \quad \text{if}\hspace{2mm}V_{0}\in G[0],
\end{equation*}
or
\begin{equation*}
[\dot{X}]_{0}^{-}=V_{0} \quad \text{if}\hspace{2mm}V_{0}\in G_{\beta}^{-}(\mathsf{P}_{\ast}).
\end{equation*}
\end{defn}
\begin{rem}
This paper is {\em not} concerned with the construction of global-in-time weak solutions of {\bf (IBVP)} for arbitrary initial data. Rather, our main result \ref{mainresult} is concerned with the characterisation of both $G_{\beta}^{-}(\mathsf{P}_{\ast})$ and $G_{\beta}^{+}(\mathsf{P}_{\ast})$ so that one might, in turn, construct a scattering map $\sigma_{\beta}$ defined thereon. As we have already claimed, this is an essential datum required in many methods for the construction of solutions to {\bf (IBVP)}. We refer the reader to \textsc{Ballard} \cite{ballard2000dynamics}, \textsc{Raous, Jean and Moreau} \cite{raous1995contact} and \textsc{Schatzman} \cite{schatzman1978class} for the r\^{o}le of scattering maps in existence theories for set dynamics.
\end{rem}
\subsection{Subsets of $G_{\beta}^{\Box}(\mathsf{P}_{\ast})$ of `Smooth' Velocity Maps}\label{smoothy}
The sets $G^{\Box}_{\beta}(\mathsf{P}_{\ast})$ contain those germs whose members are only known to be locally integrable at 0. Depending on the {\bf (IBVP)} of interest, one might wish to establish the existence of solutions which admit some regularity properties. For this reason, we introduce the notion of subsets of $G^{\Box}_{\beta}(\mathsf{P}_{\ast})$ of $\mathcal{X}$-germs.
\begin{defn}[$\mathcal{X}$-germs]
Suppose that $\mathcal{X}(\mathbb{R}, \mathbb{R}^{6})$ is a non-empty vector subspace of $L^{1}_{\mathrm{loc}}(\mathbb{R}, \mathbb{R}^{6})$. We define the $\mathcal{X}$-germs $G_{\beta}^{\Box}(\mathsf{P}_{\ast}; \mathcal{X})$ for $\Box\in\{-, +, 0, \times\}$ by
\begin{equation*}
G_{\beta}^{\Box}(\mathsf{P}_{\ast}; \mathcal{X}):=\left\{
[V]_{0}^{\Box}\,: V\in\mathcal{X}(\mathbb{R}, \mathbb{R}^{6})
\right\}\quad \text{for}\hspace{2mm}\Box\in\{-, +\}
\end{equation*} 
and also
\begin{equation*}
G_{\beta}^{\Box}(\mathsf{P}_{\ast}; \mathcal{X}):=\left\{
[V]_{0}\,: V\in\mathcal{X}(\mathbb{R}, \mathbb{R}^{6})
\right\}\quad \text{for}\hspace{2mm}\Box\in\{0, \times\}.
\end{equation*} 
\end{defn}
Natural choices for $\mathcal{X}(\mathbb{R}, \mathbb{R}^{6})$ include maps of locally-bounded variation $\mathrm{BV}_{\mathrm{loc}}(\mathbb{R}, \mathbb{R}^{6})$, smooth maps $C^{k}(\mathbb{R}, \mathbb{R}^{6})$ for $0\leq k<\infty$, infinitely-differentiable maps $C^{\infty}(\mathbb{R}, \mathbb{R}^{6})$ or real-analytic maps $C^{\omega}(\mathbb{R}, \mathbb{R}^{6})$. Intuitively, the smaller the subspace $\mathcal{X}$, the more tractable the characterisation of the associated $\mathcal{X}$-germs should become. Indeed, this is the case when $\mathcal{X}(\mathbb{R}, \mathbb{R}^{6})$ is chosen to be the set $\mathcal{F}(\mathbb{R}, \mathbb{R}^{6})$ of all constant maps. However, even in the relatively-simple case of constant maps, the structure of each $G^{\Box}_{\beta}(\mathsf{P}_{\ast}; \mathcal{X})$ is somewhat complicated and depends sensitively on the geometry of $\mathsf{P}_{\ast}\in\mathcal{C}$.
\subsection{Refinement of Questions (Q1) and (Q2)}
We are now in a position to restate questions {\bf (Q1)} and {\bf (Q2)} using the language of germs we have established over the previous sections. Suppose that $\mathcal{X}=\mathcal{X}(\mathbb{R}, \mathbb{R}^{6})$ is a vector subspace of $L^{1}_{\mathrm{loc}}(\mathbb{R}, \mathbb{R}^{6})$. We ask the following question with section \ref{decision} in mind:
\begin{enumerate}[{\bf(Q'1)}]
\item For the given collision configuration $\beta\in\mathbb{T}^{2}$, does a map $V\in L^{1}_{\mathrm{loc}}(\mathbb{R}, \mathbb{R}^{6})$ have the property that (i) $[V]_{0}^{-}\in \Sigma_{\beta}^{-}(\mathsf{P}_{\ast}; \mathcal{X})$, or (ii) $[V]_{0}^{+}\in \Sigma_{\beta}^{+}(\mathsf{P}_{\ast}; \mathcal{X})$?
\item For the given collision configuration $\beta\in\mathbb{T}^{2}$, is it the case that $\Sigma_{\beta}^{\times}(\mathsf{P}_{\ast}; \mathcal{X})\neq \varnothing$?
\end{enumerate}
In section \ref{harddeesk}, in the case of hard disks we shall answer {\bf (Q'2)} in the negative, whilst in section \ref{diskandell} in the case of hard ellipses we answer it in the affirmative. In line with the statement of \ref{mainresult}, we write
\begin{equation}
\mathcal{G}^{\Box}(\mathsf{P}_{\ast}; \mathcal{X}):=\left\{
G_{\beta}^{\Box}(\mathsf{P}_{\ast}; \mathcal{X})\,:\,\beta\in\mathbb{T}^{2}
\right\}
\end{equation}
to denote the class of all sets of velocity germs for a given $\Box\in\{-, +, 0, \times\}$.

As we are ultimately concerned with the particle model that underlies the Boltzmann equation, we study \eqref{newtonforcey} when there are no external body forces acting on the system, i.e. $F\equiv 0$ on $\mathbb{R}^{6}\times\mathbb{R}$. Even in the absence of body forces, it is possible for the dynamics to admit `pathological' behaviour. For this reason, we must refine definition \ref{formaldeffy} using the concept of {\em classical consistency}.
\begin{defn}[Classical Consistency Condition]
We say the members of the 1-parameter family 
\begin{equation}
\{\mu_{Z_{0}}\,:\,Z_{0}\in\mathcal{P}_{2}(\mathsf{P}_{\ast})\times\mathbb{R}^{6}\}
\end{equation}
of $\mathbb{R}^{6}$-valued Radon measures on $\mathbb{R}$ admit the {\bf classical consistency condition} if and only if for each $Z_{0}=[X_{0}, V_{0}]$, the associated map
\begin{equation}
X(t):=X_{0}+tV_{0}+\int_{0}^{t}\mu_{Z_{0}}([0, s])\,ds\quad \text{for}\hspace{2mm}t\in\mathbb{R}
\end{equation}
satisfies the ODE
\begin{equation}
M\ddot{X}(t)=0\quad \text{for all}\hspace{2mm}t\in I'
\end{equation}
pointwise in the classical sense on every non-empty open subinterval of the set of collision times $I'\subseteq\mathcal{T}(X)$ (whenever such subintervals exist).
\end{defn}
The purpose of the classical consistency condition is to rule out what we deem to be pathological behaviour of solutions of the IBVP associated to Newton's equations on $\mathcal{P}_{2}(\mathsf{P}_{\ast})$, namely those trajectories $t\mapsto X(t)$ which describe {\em rolling} phenomena. This will be discussed in greater detail in remark \ref{wtf} below. Let us now provide the notion of weak solution of Newton's equations with which we shall work in all the sequel.
\begin{defn}[Physical Weak Solutions of ({\bf IBVP})]\label{physweak}
Suppose $\mathsf{P}_{\ast}\in\mathcal{C}$, and that $\{\sigma_{\beta}\}_{\beta\in\mathbb{T}^{2}}$ is a family of physical scattering maps. Let $X_{0}\in\mathcal{P}_{2}(\mathsf{P}_{\ast})$ and $V_{0}\in\mathbb{R}^{6}$ satisfying 
\begin{equation*}
V_{0}\in\left\{
\begin{array}{ll}
G[0]& \quad \text{if}\hspace{2mm}X_{0}\in\mathrm{int}\,\mathcal{P}_{2}(\mathsf{P}_{\ast}),\vspace{2mm}\\
G_{\beta}^{-}(\mathsf{P}_{\ast}) & \quad \text{if}\hspace{2mm}X_{0}\in \partial\mathcal{P}_{2}(\mathsf{P}_{\ast}).
\end{array}
\right.
\end{equation*}
be given. We say that $X\in C^{0}(\mathbb{R}, \mathcal{P}_{2}(\mathsf{P}_{\ast}))$ is a {\bf physical local-in-time weak solution} of 
\begin{equation}\label{newty}
M\ddot{X}=0
\end{equation}
with initial state $Z_{0}:=[X_{0}, V_{0}]$ if and only if there exists an open interval $I\subset\mathbb{R}$ containing 0 such that
\begin{equation}
\int_{I}X(t)\cdot\phi''(t)\,dt=\int_{I}\phi(t)\,d\mu_{Z_{0}}(t) 
\end{equation}
for all $\phi\in C^{\infty}_{c}(I, \mathbb{R}^{6})$ for some $\mathbb{R}^{6}$-valued Radon measure $\mu_{Z_{0}}$ which admits the classical consistency condition. Moreover, for every $\tau\in\mathcal{T}(X)$, one has 
\begin{equation}
[\dot{X}]_{\tau}^{-}\in G_{\beta}^{-}(\mathsf{P}_{\ast}; \mathcal{F}) \quad \text{and}\quad [\dot{X}]_{\tau}^{+}=\sigma_{\beta}([X]_{\tau}^{-})\in G_{\beta}^{+}(\mathsf{P}_{\ast}; \mathcal{F}),
\end{equation}
In addition, the conservation laws \eqref{wcolm}, \eqref{wcoam} and \eqref{wcoke} hold true for all $\varphi\in C^{\infty}_{c}(I, \mathbb{R})$. Finally, $X$ agrees with the initial datum in the sense that $X(0)=X_{0}$ and 
\begin{equation*}
[\dot{X}]_{0}=V_{0} \quad \text{if}\hspace{2mm}V_{0}\in G[0],
\end{equation*}
or
\begin{equation*}
[\dot{X}]_{0}^{-}=V_{0} \quad \text{if}\hspace{2mm}V_{0}\in G_{\beta}^{-}(\mathsf{P}_{\ast}).
\end{equation*}
If $I$ can be chosen arbitrarily, we say that $X$ is a {\bf global-in-time weak solution} of \eqref{newty}.
\end{defn}
Once we show that $\mathcal{G}^{\times}(\mathsf{P}_{\ast}; \mathcal{F})\neq\{\varnothing\}$, it will follow immediately that there exist initial data $Z_{0}$ associated to which no local-in-time weak solution of \eqref{newty} exists.
\begin{rem}\label{wtf}
We would like to bring the reader's attention to the important fact that our definition \ref{physweak} of physical weak solution of Newton's equations in the absence of body forces does {\em not} permit contact rolling phenomena of the gas particles in free space. For instance, we do not accept as physical those spatial maps $X:\mathbb{R}\rightarrow\mathcal{P}_{2}(\mathsf{P}_{\ast})$ for which there exist real numbers $\tau_{-}<\tau_{+}$ with the property that
\begin{equation*}
\mathsf{P}(t)\cap\ov{\mathsf{P}}(t)=\left\{
\begin{array}{ll}
\varnothing & \quad \text{if}\hspace{2mm}t<\tau_{-}, \vspace{2mm}\\
\{P(t)\} & \quad \text{if}\hspace{2mm}\tau_{-}< t\leq\tau_{+}, \vspace{2mm}\\
\varnothing & \quad \text{if}\hspace{2mm}t>\tau_{+}, \vspace{2mm},
\end{array}
\right.
\end{equation*}
where $t\mapsto P(t)$ is of class $C^{2}$ on $(\tau_{-}, \tau_{+})$. Some authors\footnote{The author extends his sincere gratitude to Patrick Ballard for clarifying issues to him on this matter.} {\em do} allow for such behaviour, notably \textsc{Ballard} \cite{ballard2000dynamics}, whose general theory covers the case when the set $\mathsf{P}_{\ast}$ admits a real-analytic boundary curve. Our objection is straighforward: we object to such solutions as being physically admissible simply because they {\em do not satisfy} the ODE
\begin{equation}
M\ddot{X}(t)=0
\end{equation}
pointwise in the classical sense for each $t$ in the open interval $(\tau_{-}, \tau_{+})$, but rather the equation $M\ddot{X}(t)=Q(X(t), t)$ for some smooth non-trivial $Q:\mathbb{R}^{6}\times (\tau_{-}, \tau_{+})\rightarrow\mathbb{R}^{6}$. Alternatively, for those data leading to such contact rolling solutions in \cite{ballard2000dynamics}, we suggest it be preferable to aim for the construction of a distributional solution $X$ of \eqref{newty} that admits infinitely-many collisions on both the left and the right of $\tau_{-}\in\mathbb{R}$. Notably, such a spatial map would satisfy \eqref{newty} pointwise almost everywhere on $\mathbb{R}$. 
\end{rem}
\section{The Collision of Hard Disks: Characterisation of $G_{\beta}^{\Box}(\mathsf{D}_{\ast}; \mathcal{F})$}\label{harddeesk}
In this simple section, we tackle the much simpler case of hard disks before extending our approach to hard ellipses in section \ref{diskandell} below. The focus henceforth is on the characterisation of the sets of velocity germs $G_{\beta}^{\Box}(\mathsf{D}_{\ast}; \mathcal{X})$, when $\mathcal{X}=\mathcal{X}(\mathbb{R}, \mathbb{R}^{6})$ is chosen to be the vector space 
\begin{equation*}
\mathcal{F}(\mathbb{R}, \mathbb{R}^{6}):=\left\{
V\in L^{1}_{\mathrm{loc}}(\mathbb{R}, \mathbb{R}^{6})\,:\, V(t)=U\hspace{2mm}\text{for some}\hspace{2mm}U\in\mathbb{R}^{6}\hspace{2mm}\text{for a.e.}\hspace{2mm}t\in\mathbb{R}
\right\}.
\end{equation*}
This characterisation will aid us in the proof of our main theorem \ref{mainresult}. We employ the `tracking function' $\Phi:\mathbb{R}\times L^{1}_{\mathrm{loc}}(\mathbb{R}, \mathbb{R}^{6})\rightarrow\mathbb{R}$ defined by
\begin{equation*}
\Phi(t; W):=|x(t)-\ov{x}(t)|^{2}-1,
\end{equation*}
where $X=[x(t), \ov{x}(t), \vartheta(t), \ov{\vartheta}(t)]$ is defined in \eqref{com} and \eqref{ori} above. Clearly, $W\in \widetilde{\Sigma}_{\beta}^{-}(\mathsf{D}_{\ast})$ if and only if $\Phi(t; W)\leq 0$ for all $t\leq 0$. We also establish the following definition.
\begin{defn}
For any $\beta\in\mathbb{T}^{2}$, we define
\begin{align}
\Sigma_{\beta}^{-}(\mathsf{D}_{\ast}):=\left\{
U\in\mathbb{R}^{6}\,: \gamma_{\beta}\cdot U\leq 0
\right\}, \vspace{2mm} \notag\\
\Sigma_{\beta}^{+}(\mathsf{D}_{\ast}):=\left\{
U\in\mathbb{R}^{6}\,: \gamma_{\beta}\cdot U\geq 0
\right\}, \vspace{2mm} \notag \\
\Sigma_{\beta}^{0}(\mathsf{D}_{\ast}):=\left\{
U\in\mathbb{R}^{6}\,: \gamma_{\beta} \cdot U=0
\right\} \notag,
\end{align}
where $\gamma_{\beta}\in\mathbb{R}^{6}$ is the vector
\begin{equation*}
\gamma_{\beta}:=\left(
\begin{array}{c}
-2n_{\beta, 1} \\
-2n_{\beta, 2} \\
2n_{\beta, 1} \\
2n_{\beta, 2} \\
0 \\
0
\end{array}
\right).
\end{equation*}
\end{defn}
\begin{rem}
We note that for any $V\in\mathcal{F}(\mathbb{R}, \mathbb{R}^{6})$, it holds that $[V]_{s}^{-}=[V]_{s}^{+}=[V]_{s}$ for any $s\in\mathbb{R}$ (c.f. definitions \ref{lrgerms} and \ref{germs}). As such, we may denote any germ in $G_{\beta}^{\Box}(\mathsf{P}; \mathcal{F})$ generated by $V$ unambiguously by $[V]_{0}$.
\end{rem}
\begin{thm}\label{mainell}
Suppose $\mathcal{X}=\mathcal{F}(\mathbb{R}, \mathbb{R}^{6})$. For each $\beta\in\mathbb{T}^{2}$ and $\Box\in\{-, +, 0\}$, there exists a map $\psi_{\beta}:\mathbb{R}^{6}\rightarrow G_{\beta}(\mathsf{D}_{\ast}; \mathcal{X})$ with the property
\begin{equation*}
\psi_{\beta}^{\Box}(\Sigma_{\beta}^{\Box}(\mathsf{D}_{\ast}))= G_{\beta}^{\Box}(\mathsf{D}_{\ast}; \mathcal{F}),
\end{equation*}
where $\psi_{\beta}^{\Box}:=\psi_{\beta}|_{\Sigma_{\beta}^{\Box}(\mathsf{D}_{\ast}; \mathcal{F})}$. Moreover, $\psi_{\beta}^{\Box}|_{\Sigma_{\beta}^{\Box}(\mathsf{D}_{\ast})}$ is a bijection for $\Box\in\{-, +, 0\}$.
\end{thm}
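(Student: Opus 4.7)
The plan is to exhibit $\psi_\beta$ as the canonical assignment of a germ to a constant map, and then to read off the three claimed set equalities from a single explicit computation of the tracking function $\Phi$. Concretely, define
\begin{equation*}
\psi_\beta : \mathbb{R}^6 \to G_\beta(\mathsf{D}_\ast; \mathcal{F}), \qquad \psi_\beta(U) := [\tilde U]_0,
\end{equation*}
where $\tilde U \in \mathcal{F}(\mathbb{R}, \mathbb{R}^6)$ denotes the constant map $t \mapsto U$. By the preceding remark, the left-germ, right-germ and two-sided germ of any constant map coincide, so the definition is unambiguous regardless of which $G_\beta^\Box$ one has in mind. Injectivity of $\psi_\beta$ on all of $\mathbb{R}^6$ is immediate: two constant maps that agree almost everywhere on any neighbourhood of $0$ must be equal. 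Consequently, injectivity of every restriction $\psi_\beta^\Box := \psi_\beta|_{\Sigma_\beta^\Box(\mathsf{D}_\ast)}$ is automatic, and the theorem reduces to the set equality
\begin{equation*}
\psi_\beta^{-1}\bigl(G_\beta^\Box(\mathsf{D}_\ast; \mathcal{F})\bigr) = \Sigma_\beta^\Box(\mathsf{D}_\ast), \qquad \Box \in \{-, +, 0\}.
\end{equation*}

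The entire content of this equality is the explicit behaviour of $\Phi$ at a constant velocity $U = [v, \ov{v}, \omega, \ov{\omega}]$. Because disks are rotationally symmetric, $\mathscr{L}_2(\mathsf{D}(t) \cap \ov{\mathsf{D}}(t)) = 0$ iff $|x(t) - \ov{x}(t)| \geq 1$, iff $\Phi(t; \tilde U) \geq 0$; the angular components of $U$ play no role. Substituting $x(t) = tv$ and $\ov{x}(t) = d_\beta e(\psi) + t \ov{v}$ with $d_\beta = 1$, an elementary expansion gives
\begin{equation*}
\Phi(t; \tilde U) = t\,(\gamma_\beta \cdot U) + t^2 |\ov{v} - v|^2,
\end{equation*}
once one notes the identity $\gamma_\beta \cdot U = 2\, e(\psi) \cdot (\ov{v} - v) = 2\, n_\beta \cdot (\ov{v} - v)$, valid because the outward collision normal for two disks in contact is $n_\beta = e(\psi)$.

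The three characterisations now follow from inspection of this quadratic. If $\gamma_\beta \cdot U \leq 0$, then both summands are nonnegative for every $t \leq 0$, so $\tilde U \in \widetilde{\Sigma}_\beta^-(\mathsf{D}_\ast)$; if instead $\gamma_\beta \cdot U > 0$, the factorisation $\Phi(t; \tilde U) = t\bigl[(\gamma_\beta \cdot U) + t |\ov{v} - v|^2\bigr]$ shows that $\Phi(t; \tilde U) < 0$ for all sufficiently small $t < 0$, so the disks overlap in every left-neighbourhood of $0$ and no constant-map representative of $[\tilde U]_0^-$ can belong to $\widetilde{\Sigma}_\beta^-(\mathsf{D}_\ast)$. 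The identical analysis with $t \geq 0$ handles $G_\beta^+$, and since $\widetilde{\Sigma}_\beta^0 = \widetilde{\Sigma}_\beta^- \cap \widetilde{\Sigma}_\beta^+$ the grazing case is their intersection, corresponding to $\gamma_\beta \cdot U = 0$.

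The argument is essentially devoid of genuine difficulty: the one substantive point is that for constant velocities the tracking function is an explicit quadratic in $t$ whose linear coefficient is $\gamma_\beta \cdot U$, and this coefficient determines the sign of $\Phi$ near $0$. The real value of the result lies in contrast with the hard-ellipse case of the following section, where $\Phi(\cdot; \tilde U)$ no longer reduces to a polynomial in $t$, the collision normal $n_\beta$ ceases to be $e(\psi)$, and the classification of velocity germs becomes a genuinely nonlinear condition on $U$ -- which is precisely where the nonempty inadmissible class $\mathcal{G}^\times$ of Theorem~\ref{mainresult} arises.
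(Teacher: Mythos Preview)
Your proof is correct and follows essentially the same approach as the paper: define $\psi_\beta$ as the natural inclusion $U\mapsto[U]_0$, compute the tracking function $\Phi(t;\tilde U)$ as an explicit quadratic in $t$ with nonnegative leading coefficient, and read off the classification from the sign of the linear coefficient $\gamma_\beta\cdot U$. If anything, your treatment of the reverse inclusions is slightly cleaner than the paper's---you explicitly factorise $\Phi(t;\tilde U)=t\bigl[(\gamma_\beta\cdot U)+t|\ov v-v|^2\bigr]$ to show that $\gamma_\beta\cdot U>0$ forces overlap in every left-neighbourhood of $0$, whereas the paper appeals somewhat loosely to the bijectivity of $\psi_\beta$ to close the argument.
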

\begin{proof}
As every vector in $\mathbb{R}^{6}$ gives rise to a constant map in $L^{1}_{\mathrm{loc}}(\mathbb{R}, \mathbb{R}^{6})$, we define $\psi_{\beta}:\mathbb{R}^{6}\rightarrow G_{\beta}(\mathsf{D}_{\ast}; \mathcal{F})$ to be the natural inclusion operator given by
\begin{equation*}
\psi_{\beta}(U):=[U]_{0}\quad\text{for any}\hspace{2mm}U\in\mathbb{R}^{6}.
\end{equation*}
Manifestly, by definition of the space of germs $G_{\beta}(\mathsf{D}_{\ast}; \mathcal{F})$, this map is a bijection. As suggested above, we consider the tracking function $\Phi:\mathbb{R}\times L^{1}_{\mathrm{loc}}(\mathbb{R}, \mathbb{R}^{6})\rightarrow\mathbb{R}$ defined in above. We lead with the following lemma.
\begin{lem}\label{help1}
For all $\beta\in\mathbb{T}^{2}$, one has that $[V]_{0}\in G_{\beta}^{-}(\mathsf{D}_{\ast}; \mathcal{F})$ if and only if for every $W\in [V]_{0}$, there exists $\delta=\delta(W)>0$ such that
\begin{equation*}
\Phi(t)\geq 0 \quad \text{for}\hspace{2mm}-\delta(W)<t\leq 0.
\end{equation*}
Similarly, $[V]_{0}\in G_{\beta}^{+}(\mathsf{D}_{\ast}; \mathcal{F})$ if and only if for every $W\in [V]_{0}$ there exists $\delta=\delta(W)>0$ such that
\begin{equation*}
\Phi(t)\geq 0 \quad \text{for}\hspace{2mm}0\leq t <\delta(W).
\end{equation*}
\end{lem}
\begin{proof}
This is immediate from the definition of the tracking function $\Phi$.
\end{proof}
As a result of this lemma, one need only investigate the sign behaviour of $\Phi(\cdot; W)$ in a neighbourhood 0 in order to decide to which set $G_{\beta}^{\Box}(\mathsf{D}; \mathcal{F})$ the germ $[W]_{0}$ belongs. 

Suppose $U=[v, \ov{v}, \omega, \ov{\omega}]\in\mathbb{R}^{6}$ is given and fixed. We note that for any $W\in [U]_{0}\in G_{\beta}(\mathsf{D}_{\ast}; \mathcal{F})$, there exists $\delta(W)>0$ such that
\begin{equation}\label{rest}
(X|_{(-\delta(W), 0]})(t)=\left[
\begin{array}{c}
tv \\
d_{\beta}e(\psi)+t\ov{v}\\
t\omega \\
\theta+t\ov{\omega}
\end{array}
\right] \quad \text{for}\hspace{2mm}-\delta(W)<t\leq 0.
\end{equation}
One can check that
\begin{equation}\label{tay1}
\Phi(t; W)=(\gamma_{\beta}\cdot U)t+(U\cdot AU)t^{2}
\end{equation}
for $-\delta(W)<t\leq 0$, where $\gamma_{\beta}\in\mathbb{R}^{6}$ is defined by above, and $A_{\beta}^{\varepsilon}\in\mathbb{R}^{6\times 6}$ is 
\begin{equation*}
A_{\beta}:=\left(
\begin{array}{cccccc}
1 & 0 & -1 & 0 & 0 & 0\\
0 & 1 & 0 & -1 & 0 & 0\\
-1 & 0 & 1 & 0 & 0 & 0 \\
0 & -1 & 0 & 1 & 0 & 0 \\
0 & 0 & 0 & 0 & 0 & 0\\
0 & 0 & 0 & 0 & 0 & 0\\
\end{array}
\right).
\end{equation*}
To understand how the choice of $U\in\mathbb{R}^{6}$ affects the sign behaviour of $\Phi(\cdot; W)$ in a neighbourhood of 0 for any $W\in[U]_{0}$, we split our considerations into two cases:
\subsection*{Case I: $\gamma_{\beta}\cdot U\neq 0$}
For any $W\in[U]_{0}$, one has from \eqref{tay1} that
\begin{equation*}
\lim_{t\rightarrow 0-}|\Phi(t; W)-(\gamma_{\beta}\cdot U) t|=0.
\end{equation*}
It follows that there exists $0<\delta_{1}(W)\leq\delta(W)$ such that $\Phi(t; W)\geq 0$ for all $-\delta_{1}(W)<t\leq 0$ if $\gamma_{\beta}\cdot U<0$. We conclude from lemma \ref{help1} that
\begin{equation*}
\psi_{\beta}\left(\left\{U\in\mathbb{R}^{6}\,:\,\gamma_{\beta}\cdot U<0\right\}\right)\subseteq G_{\beta}^{-}(\mathsf{D}_{\ast}; \mathcal{F}).
\end{equation*}
Similarly, one can show that
\begin{equation*}
\psi_{\beta}\left(\left\{U\in\mathbb{R}^{6}\,:\, \gamma_{\beta}\cdot U>0\right\}\right)\subseteq G_{\beta}^{+}(\mathsf{D}_{\ast}; \mathcal{F}).
\end{equation*}
\subsection*{Case II: $\gamma_{\beta}\cdot U=0$}
Observing \eqref{tay1}, one has in this case that
\begin{equation*}
\lim_{t\rightarrow 0-}\left|\Phi(t; W)-(U\cdot A_{\beta}U)t^{2}\right|=0,
\end{equation*}
for any $W\in[U]_{0}$. However, since $A_{\beta}$ is positive semi-definite on $\mathbb{R}^{6}$, it follows that there exists $\delta(W)>0$ such that $\Phi(t; W)\geq 0$ for all $-\delta(W)<t<\delta(W)$. As such, we infer that
\begin{equation}\label{do}
\psi_{\beta}\left(\left\{U\in\mathbb{R}^{6}\,:\,\gamma_{\beta}\cdot U=0\right\}\right)\subseteq G_{\beta}^{0}(\mathsf{D}_{\ast}; \mathcal{F}).
\end{equation}
By definition of $G_{\beta}^{0}(\mathsf{D}_{\ast}; \mathcal{F})$, it follows that
\begin{align*}
\psi_{\beta}\left(\left\{U\in\mathbb{R}^{6}\,:\, \gamma_{\beta}\cdot U\leq 0\right\}\right)\subseteq G_{\beta}^{-}(\mathsf{D}_{\ast}; \mathcal{F}), \vspace{2mm} \notag\\
\psi_{\beta}\left(\left\{U\in\mathbb{R}^{6}\,:\,\gamma _{\beta}\cdot U\geq 0\right\}\right)\subseteq G_{\beta}^{+}(\mathsf{D}_{\ast}; \mathcal{F}).
\end{align*}
As $\psi_{\beta}$ is a bijection, the reverse inclusions also hold true, whence follows the proof of the theorem.
\end{proof}
We conclude this section with a statement that when the two sets are congruent disks, all velocity vectors in $G_{\beta}(\mathsf{D}_{\ast}; \mathcal{F})$ are admissible (in the sense of {\bf (Q'2)} and, in fact, in the sense of {\bf (Q2)}).
\begin{cor}
Under the same assumptions of theorem \ref{mainell}, one has $G_{\beta}^{\times}(\mathsf{D}_{\ast}; \mathcal{F})=\varnothing$.
\end{cor}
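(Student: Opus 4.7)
The plan is to extract the conclusion directly from the case analysis already performed in the proof of theorem \ref{mainell}. The key observation is that, for any constant velocity $U\in\mathbb{R}^{6}$ and any representative $W\in[U]_{0}$, the tracking function is an explicit quadratic
\begin{equation*}
\Phi(t; W)=(\gamma_{\beta}\cdot U)t+(U\cdot A_{\beta}U)t^{2}\quad \text{on}\quad (-\delta(W),\delta(W)),
\end{equation*}
and the matrix $A_{\beta}\in\mathbb{R}^{6\times 6}$ appearing in the proof of theorem \ref{mainell} is positive semi-definite. Inadmissibility, by contrast, would demand that $\Phi(t; W)<0$ throughout some open interval $\mathcal{N}\ni 0$ (equivalently, on a genuinely two-sided punctured neighbourhood of $0$), and it is this two-sidedness that will fail.

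First I would dispose of the grazing case: when $\gamma_{\beta}\cdot U=0$ one has $\Phi(t; W)=(U\cdot A_{\beta}U)t^{2}\geq 0$ for every $t\in\mathbb{R}$, so $[U]_{0}$ belongs to $G_{\beta}^{0}(\mathsf{D}_{\ast};\mathcal{F})$ and manifestly cannot sit in $G_{\beta}^{\times}(\mathsf{D}_{\ast};\mathcal{F})$. Next I would treat the generic case $\gamma_{\beta}\cdot U\neq 0$: here the linear term dominates near the origin, and for any choice of $W\in[U]_{0}$ there exists $0<\delta_{1}(W)\leq\delta(W)$ such that the sign of $\Phi(\cdot; W)$ on $(-\delta_{1}(W),\delta_{1}(W))\setminus\{0\}$ agrees with the sign of $(\gamma_{\beta}\cdot U)t$. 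In particular, $\Phi(t; W)>0$ on one one-sided punctured neighbourhood of $0$, which rules out the simultaneous existence of an open interval about $0$ on which $\Phi(\cdot; W)<0$.

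Combining these two cases, no constant-velocity germ satisfies the inadmissibility condition; by the bijectivity statement in theorem \ref{mainell}, every element of $G_{\beta}(\mathsf{D}_{\ast};\mathcal{F})$ arises from some $U\in\mathbb{R}^{6}$, and therefore
\begin{equation*}
G_{\beta}^{\times}(\mathsf{D}_{\ast};\mathcal{F})=\varnothing,
\end{equation*}
as required. I expect no genuine obstacle here: the corollary is essentially a reading of the computations already in the proof of theorem \ref{mainell}, and the positive semi-definiteness of $A_{\beta}$ (which is the algebraic reflection of the fact that two translating congruent disks whose centres are at unit distance cannot be made to overlap by an infinitesimal constant velocity on both sides of $0$) is the single substantive ingredient. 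The only minor care required is in verifying that Cases~I and~II of the previous proof together cover every $U\in\mathbb{R}^{6}$, which is immediate since $\gamma_{\beta}\cdot U\in\mathbb{R}$ is either zero or non-zero.
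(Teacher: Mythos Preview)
Your proposal is correct and is precisely the argument the paper intends: the corollary is stated in the paper without proof, as an immediate consequence of the case analysis (Cases~I and~II) already carried out in the proof of theorem~\ref{mainell}, and your write-up simply makes that deduction explicit via the positive semi-definiteness of $A_{\beta}$ and the sign of $\gamma_{\beta}\cdot U$.
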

As such, $G_{\beta}(\mathsf{D}_{\ast}; \mathcal{F})$ admits a partition into {\em only} pre-collisional and post-collisional velocities. As we shall find in the following section, this is not true for $G_{\beta}(\mathsf{P}_{\ast}; \mathcal{F})$ for every $\mathsf{P}_{\ast}\in\mathcal{C}$; in particular, when $\mathsf{P}_{\ast}$ is taken to be an ellipse, remarkably one can find velocities therein which are neither pre- nor post-collisional for any prescribed frictionless boundary conditions.
\section{The Collision of Hard Ellipses: Understanding $G_{\beta}^{\Box}(\mathsf{E}_{\ast}; \mathcal{F})$}\label{diskandell}
Let us now consider the dynamics of disks and ellipses in $\mathbb{R}^{2}$, with the aim of answering {\bf (Q'1)} and {\bf (Q'2)} in this special case. For a given $0<\varepsilon<1$, we write $\mathsf{E}_{\ast}^{\varepsilon}\subset\mathbb{R}^{2}$ to denote the ellipse given by
\begin{equation*}
\mathsf{E}_{\ast}^{\varepsilon}:=\left\{
y=(y_{1}, y_{2})\in\mathbb{R}^{2}\,:\, (\varepsilon y_{1})^{2}+y_{2}^{2}\leq 1
\right\}.
\end{equation*}
We consider the tracking function $\Psi:\mathbb{R}\times L^{1}_{\mathrm{loc}}(\mathbb{R}, \mathbb{R}^{6})\rightarrow\mathbb{R}$ defined by
\begin{equation*}
\Psi(t; W):=F(q(t), t) \quad \text{for}\hspace{2mm}W\in L^{1}_{\mathrm{loc}}(\mathbb{R}, \mathbb{R}^{6}),
\end{equation*}
where $F:\mathbb{R}^{2}\times\mathbb{R}\rightarrow\mathbb{R}$ is given by 
\begin{align}
F(y, t):=F_{0}(R(\vartheta(t))^{T}[y-x(t)]), \vspace{2mm} \notag\\
F_{0}(y):= y\cdot E_{\varepsilon}y-1, \vspace{2mm} \notag\\
E_{\varepsilon}:=\left(
\begin{array}{cc}
\varepsilon^{2} & 0 \\
0 & 1
\end{array}
\right),
\end{align}
and
\begin{equation*}
q(t):=\ov{x}(t)+R(\ov{\vartheta}(t))R(\theta)^{T}(p_{\beta}-d_{\beta}e(\psi)),
\end{equation*}
with $X(t):=[x(t), \ov{x}(t), \vartheta(t), \ov{\vartheta}(t)]$ defined in \eqref{com} and \eqref{ori} above, with $\vartheta\equiv 0$ and $\ov{\vartheta}\equiv\theta$. In the remainder of section \ref{diskandell}, for aesthetic reasons we suppress the dependence of the spatial maps $p_{\beta}, q_{\beta}, n_{\beta}$ and $d_{\beta}$ on the parameter $\varepsilon>0$. We require the following definition.
\begin{defn}
For any $0<\varepsilon<1$ and $\beta\in\mathbb{T}^{2}$, we define
\begin{align}
\Sigma_{\beta}^{-}(\mathsf{E}_{\ast}^{\varepsilon}):=\left\{
U\in\mathbb{R}^{6}\,: \nu_{\beta}^{\varepsilon}\cdot U\leq 0
\right\}, \vspace{2mm} \notag\\
\Sigma_{\beta}^{+}(\mathsf{E}_{\ast}^{\varepsilon}):=\left\{
U\in\mathbb{R}^{6}\,: \nu_{\beta}^{\varepsilon}\cdot U\geq 0
\right\}, \vspace{2mm} \notag \\
\Sigma_{\beta}^{0}(\mathsf{E}_{\ast}^{\varepsilon}):=\left\{
U\in\mathbb{R}^{6}\,: \nu_{\beta}^{\varepsilon}\cdot U=0
\right\},
\end{align}
where $\nu_{\beta}^{\varepsilon}\in\mathbb{R}^{6}$ is the vector
\begin{equation}\label{noo}
\nu_{\beta}^{\varepsilon}:=\left[
\begin{array}{c}
-n_{\beta} \\
n_{\beta} \\
-p_{\beta}^{\perp}\cdot n_{\beta} \\
q_{\beta}^{\perp}\cdot n_{\beta}
\end{array}
\right].
\end{equation}
\end{defn}
We now prove the following theorem.
\begin{thm}\label{mainell}
For each $0<\varepsilon<\frac{1}{2}$, there exists an open set $B_{\varepsilon}\in\mathbb{T}^{2}$ such that
\begin{equation*}
G_{\beta}^{\times}(\mathsf{E}_{\ast}^{\varepsilon}; \mathcal{F})\neq \varnothing.
\end{equation*}
for all $\beta\in B_{\varepsilon}$.
\end{thm}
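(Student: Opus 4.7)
The plan is to adapt the Taylor-expansion argument of Section \ref{harddeesk}, replacing the disk tracking function $\Phi$ by the ellipse tracking function $\Psi$ of Section \ref{diskandell}. The crucial difference is that, unlike the disk case where the second-order coefficient $A_\beta$ appearing in \eqref{tay1} is manifestly positive semi-definite, for ellipses the analogous quadratic form is in general indefinite on the grazing hyperplane. Exploiting this indefiniteness, one produces constant velocity vectors $U\in\mathbb{R}^6$ whose trajectories satisfy $\Psi(t;U)<0$ on a punctured neighbourhood of $0$; strict convexity of $\mathsf{E}_\ast^\varepsilon$ then upgrades this pointwise interpenetration of the tracked material point to $\mathscr{L}_2(\mathsf{P}(t)\cap\ov{\mathsf{P}}(t))>0$ on an open set containing $0$, placing $[U]_0$ in $G_\beta^\times(\mathsf{E}_\ast^\varepsilon;\mathcal{F})$.

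First I would fix $U=[v,\ov{v},\omega,\ov{\omega}]\in\mathbb{R}^6$ (identified with its constant representative in $\mathcal{F}(\mathbb{R},\mathbb{R}^6)$) and Taylor expand $\Psi(t;U)$ at $t=0$. Since $p_\beta\in\partial\mathsf{E}_\ast^\varepsilon$ one has $\Psi(0;U)=0$, while direct differentiation using the kinematic identities $\dot q(0)=\ov{v}-\ov{\omega}\,q_\beta^\perp$, $\dot x(0)=v$, $\dot\vartheta(0)=\omega$, together with the identification of the outward unit normal $n_\beta$ at $p_\beta$ as a positive scalar multiple of $\nabla F_0(p_\beta)=2E_\varepsilon p_\beta$, yields
\begin{equation*}
\Psi'(0;U) \;=\; 2|E_\varepsilon p_\beta|\,(\nu_\beta^\varepsilon\cdot U),
\end{equation*}
where $\nu_\beta^\varepsilon$ is the vector \eqref{noo}. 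Arguing as in the two cases of the disk theorem, this settles the non-grazing directions: whenever $\nu_\beta^\varepsilon\cdot U\neq 0$ the germ $[U]_0$ lies in either $G_\beta^-(\mathsf{E}_\ast^\varepsilon;\mathcal{F})$ or $G_\beta^+(\mathsf{E}_\ast^\varepsilon;\mathcal{F})$ depending on the sign, and no interpenetration of the tracked point occurs on one side of $t=0$.

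The decisive step is the second-order analysis on the grazing hyperplane $\Sigma_\beta^0(\mathsf{E}_\ast^\varepsilon)=\{U:\nu_\beta^\varepsilon\cdot U=0\}$. For such $U$ one obtains
\begin{equation*}
\Psi(t;U) \;=\; \tfrac{1}{2}(U\cdot B_\beta^\varepsilon U)\,t^2 + o(t^2),
\end{equation*}
where $B_\beta^\varepsilon\in\mathbb{R}^{6\times 6}$ is symmetric and assembled from the constant Hessian $2E_\varepsilon$ of $F_0$, the centripetal contributions of the two rotations $R(\vartheta(t))^T$ and $R(\ov{\vartheta}(t))$ acting on $p_\beta$ and $q_\beta$, and cross-terms coupling the translational and angular velocities. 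The main obstacle, and the heart of the argument, is to exhibit, for each $0<\varepsilon<\tfrac{1}{2}$, at least one $\beta\in\mathbb{T}^2$ together with a grazing $U$ for which $U\cdot B_\beta^\varepsilon U<0$. I would arrange this by choosing $\beta$ so that the two ellipses meet tip-to-tip along their major axes, where the curvature of $\partial\mathsf{E}_\ast^\varepsilon$ at $p_\beta$ is of order $\varepsilon^2$ and hence very small, and then selecting $U$ with a large antisymmetric spin $(-\omega,\omega)$ compensated by translational components that enforce $\nu_\beta^\varepsilon\cdot U=0$. Because the spin drives the tracked material point $q(t)$ along an arc whose curvature greatly exceeds that of $\partial\mathsf{E}_\ast^\varepsilon$ at the contact point, the dominant $\omega$-dependent contribution to $U\cdot B_\beta^\varepsilon U$ becomes strictly negative once $\varepsilon<\tfrac{1}{2}$. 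Continuity of the map $\beta\mapsto(\nu_\beta^\varepsilon,B_\beta^\varepsilon)$, which follows from the real-analytic dependence of $p_\beta,q_\beta,n_\beta,d_\beta$ on $\beta$, then produces the open neighbourhood $B_\varepsilon\subset\mathbb{T}^2$ on which $B_\beta^\varepsilon$ remains negative on the appropriately perturbed grazing hyperplane; the obvious analogue of Lemma \ref{help1} applied to $\Psi$ finally produces $[U]_0\in G_\beta^\times(\mathsf{E}_\ast^\varepsilon;\mathcal{F})$ for every $\beta\in B_\varepsilon$.
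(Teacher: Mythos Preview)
Your overall architecture coincides with the paper's: Taylor expand $\Psi(t;W)$, dispose of Case~I ($\nu_\beta^\varepsilon\cdot U\neq 0$) exactly as for disks, and then in Case~II exhibit a grazing $U$ with $U\cdot A_\beta^\varepsilon U<0$. The paper does the last step differently: it fixes the special velocity $U=(0,0,\ov v_1,\ov v_2,1,0)$ (one ellipse purely rotating, the other purely translating), computes $\Psi''(0)$ explicitly on the grazing hyperplane as a function of $\ov v_1$ and the contact parameter $u_\beta$ (with $p_\beta=(\varepsilon^{-1}\cos u_\beta,\sin u_\beta)$), and shows that the velocity-independent offset $K_\beta$ is strictly positive whenever $\sin^2 u_\beta>\varepsilon/(1+\varepsilon)$, so in particular away from the major-axis tips.

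Your own Case~II construction contains a concrete error. For the ellipse $\varepsilon^2 y_1^2+y_2^2=1$ the curvature at the \emph{major}-axis vertices $(\pm\varepsilon^{-1},0)$ equals $1/\varepsilon$, which is large; it is at the \emph{minor}-axis vertices $(0,\pm1)$ that the curvature is $\varepsilon^2$. At the major-axis tip-to-tip configuration with the antisymmetric spin $U=(0,0,0,0,-\omega_0,\omega_0)$ one has $p_\beta=q_\beta=(\varepsilon^{-1},0)$, $E_\varepsilon p_\beta=(\varepsilon,0)$, and the paper's expression for $\Psi''(0)$ yields
\[
U\cdot A_\beta^\varepsilon U \;=\; -8\omega_0^2 \;+\; 8\omega_0^2\varepsilon^{-2}\;=\;8\omega_0^2(\varepsilon^{-2}-1)\;>\;0,
\]
so no interpenetration occurs and your argument fails as written. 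Your curvature heuristic is, however, sound once the axis is corrected: at the minor-axis tip-to-tip configuration ($p_\beta=q_\beta=(0,1)$) the same computation gives $U\cdot A_\beta^\varepsilon U=-8\omega_0^2(1-\varepsilon^2)<0$ for every $\varepsilon\in(0,1)$, and the rest of your outline (continuity in $\beta$, the $\Psi$-analogue of Lemma~\ref{help1}, strict convexity upgrading $\Psi<0$ to positive-measure overlap) goes through. With that fix your route is a genuine and slightly cleaner alternative to the paper's asymmetric-velocity calculation, albeit based on a single $\beta_\ast$ rather than the paper's explicit range of contact points.
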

\begin{proof}
Suppose $U=[v, \ov{v}, \omega, \ov{\omega}]\in\mathbb{R}^{6}$ is given and fixed. We note that for any $W\in [U]_{0}\in G_{\beta}(\mathsf{E}_{\ast}^{\varepsilon}; \mathcal{F})$, there exists $\delta(W)>0$ such that
\begin{equation}\label{rest}
(X|_{(-\delta(W), 0]})(t)=\left[
\begin{array}{c}
tv \\
d_{\beta}e(\psi)+t\ov{v}\\
t\omega \\
\theta+t\ov{\omega}
\end{array}
\right] \quad \text{for}\hspace{2mm}-\delta(W)<t\leq 0.
\end{equation}
As $\Psi(\cdot; W)$ is real-analytic in a neighbourhood of 0, one can show that
\begin{equation}\label{tay1}
\Psi(t; W)=\Psi'(0)t+\frac{\Psi''(0)}{2!}t^{2}+\mathcal{O}(t^{3})\quad \text{as}\hspace{2mm}t\rightarrow 0-,
\end{equation}
with the Taylor coefficients $\Psi'(0)$ and $\Psi''(0)$ being given by
\begin{equation*}
\Psi'(0):= 2 E_{\varepsilon}p_{\beta}\cdot\left(\ov{v}+\ov{\omega}q_{\beta}^{\perp}-v-\omega p_{\beta}^{\perp}\right)
\end{equation*}
and
\begin{equation*}
\Psi''(0):= U\cdot A_{\beta}^{\varepsilon}U,
\end{equation*}
where $A_{\beta}^{\varepsilon}\in\mathbb{R}^{6\times 6}$ is realised as
\begin{align}
U\cdot A_{\beta}^{\varepsilon}U:=4\omega (E_{\varepsilon}p_{\beta})^{\perp}\cdot (\ov{v}+\ov{\omega}q_{\beta}^{\perp}-v-\omega p_{\beta}^{\perp}) +2E_{\varepsilon}p_{\beta}\cdot (-\ov{\omega}^{2}q_{\beta}+\omega^{2}p_{\beta}) \notag \vspace{2mm}\\
+2(\ov{v}+\ov{\omega}q_{\beta}^{\perp}-v-\omega p_{\beta}^{\perp})\cdot E_{\varepsilon}(\ov{v}+\ov{\omega}q_{\beta}^{\perp}-v-\omega p_{\beta}^{\perp}).
\end{align}
To understand how the choice of $U\in\mathbb{R}^{6}$ affects the sign behaviour of $\Psi(\cdot; W)$ in a neighbourhood of 0 for any $W\in[U]_{0}$, as before we split our considerations into two cases:
\subsection*{Case I: $\nu_{\beta}^{\varepsilon}\cdot U\neq 0$}
For any $W\in[U]_{0}$, as one has from the Taylor expansion \eqref{tay1} that
\begin{equation*}
\lim_{t\rightarrow 0-}|\Psi(t; W)-(\nu_{\beta}^{\varepsilon}\cdot U) t|=0,
\end{equation*}
it follows that there exists $0<\delta_{1}(W)\leq\delta(W)$ such that $\Psi(t; W)\geq 0$ for all $-\delta_{1}(W)<t\leq 0$ if $\nu_{\beta}^{\varepsilon}\cdot U<0$. We conclude from lemma \ref{help1} that
\begin{equation*}
\psi_{\beta}^{\varepsilon}\left(\mathrm{int}\,\Sigma_{\beta}^{-}(\mathsf{E}_{\ast}^{\varepsilon})\right)\subseteq G_{\beta}^{-}(\mathsf{E}_{\ast}^{\varepsilon}; \mathcal{F}).
\end{equation*}
In the very same manner, one has that
\begin{equation*}
\psi_{\beta}^{\varepsilon}\left(\mathrm{int}\,\Sigma_{\beta}^{+}(\mathsf{E}_{\ast}^{\varepsilon})\right)\subseteq G_{\beta}^{+}(\mathsf{E}_{\ast}^{\varepsilon}; \mathcal{F}).
\end{equation*}
\subsection*{Case II: $\nu_{\beta}^{\varepsilon}\cdot U=0$}
It is our aim to show that the sign of the second Taylor coefficient is not guaranteed to be non-negative. By vanishing of the first Taylor coefficient in \eqref{tay1}, one has
\begin{equation}\label{thegoodexp}
\lim_{t\rightarrow 0}\left|\Psi(t; W)-U\cdot A_{\beta}^{\varepsilon}U \frac{t^{2}}{2!}\right|=0,
\end{equation}
for any $W\in[U]_{0}$. Unlike in the case of disks, we claim that there exists $\beta_{\ast}(\varepsilon)\in\mathbb{T}^{2}$ (and so an open neighbourhood thereof) such that the quadratic form $U\cdot A_{\beta_{\ast}(\varepsilon)}^{\varepsilon}U$ can assume negative values on $\mathbb{R}^{6}$. In particular, we have the following lemma.
\begin{lem}\label{heat}
For every $0<\varepsilon<\frac{1}{2}$, there exists $\beta_{\ast}(\varepsilon)\in\mathbb{T}^{2}$, a vector $U_{\ast}(\varepsilon)\in\Sigma_{\beta}^{0}(\mathsf{E}_{\ast}^{\varepsilon})$ and a radius $\rho_{\ast}(\varepsilon)>0$ such that
\begin{equation*}
U\cdot A_{\beta_{\ast}(\varepsilon)}^{\varepsilon}U<0 \quad \text{for all}\hspace{2mm}V\in B(U_{\ast}, \rho_{\ast})\cap \Sigma_{\beta}^{0}(\mathsf{E}_{\ast}^{\varepsilon}),
\end{equation*}
where $B(U, \rho)\subset\mathbb{R}^{6}$ is the open ball centred at $U$ with radius $\rho$.
\end{lem}
\begin{proof}
We focus our attention on those vectors $U\in\mathbb{R}^{6}$ of the form
\begin{equation}\label{spesh}
U=\left(
\begin{array}{c}
0 \\
0 \\
\ov{v}_{1} \\
\ov{v}_{2} \\
1 \\
0
\end{array}
\right),
\end{equation}
for some $\ov{v}\in\mathbb{R}^{2}\setminus\{0\}$, namely those velocities for which one ellipse is rotating but not translating, while the other is translating but not rotating. For any such vector \eqref{spesh} satisfying the additional criterion that $U\in\Sigma_{\beta}^{0}(\mathsf{E}_{\ast}^{\varepsilon})$, one can show that
\begin{equation*}
\Psi''(0)=\frac{2\varepsilon^{2}}{p_{\beta, 2}^{2}}\left[ \ov{v}_{1}+\left(\frac{\varepsilon^{2}-1}{\varepsilon^{2}}\right)p_{\beta, 2}^{3}\right]^{2}-2\underbrace{\left[\frac{(\varepsilon^{2}-1)^{2}}{\varepsilon^{2}}p_{\beta, 2}^{4}+\varepsilon^{2}+1-2\varepsilon^{2}|p_{\beta}|^{2}\right]}_{K_{\beta}:=},
\end{equation*}
assuming that $\beta$ is taken such that $p_{\beta, 2}\neq 0$. Note that $K_{\beta}$ is velocity-independent. If one can find $\beta\in\mathbb{T}^{2}$ such that $K_{\beta}>0$, it will be possible to find $\ov{v}_{1}=\ov{v}_{1}(\beta)\in\mathbb{R}$ such that $\Psi''(0)<0$, and so the claim of the lemma will follow by continuity of the map $U\mapsto U\cdot A_{\beta}^{\varepsilon}U$. 

In this direction, we note that for a given $\beta\in\mathbb{T}^{2}$ one can find $u_{\beta}\in\mathbb{S}^{1}$ such that
\begin{equation*}
p_{\beta}=\left(
\begin{array}{c}
\varepsilon^{-1}\cos u_{\beta}\\
\sin u_{\beta}
\end{array}
\right).
\end{equation*}
The map $\beta\mapsto u_{\beta}$ is surjective. Writing $p_{\beta}$ in this way, the constant $K_{\beta}$ admits the representation
\begin{equation}\label{weee}
K_{\beta}=\frac{(\varepsilon^{2}-1)^{2}}{\varepsilon^{2}}\left[ \sin^{2}u_{\beta}+\frac{\varepsilon^{2}}{1-\varepsilon^{2}}\right]^{2}-1.
\end{equation}
Naturally, by considering the polynomial $P:[0, 1]\rightarrow\mathbb{R}$ associated to \eqref{weee} defined by
\begin{equation*}
P(x):=\frac{(\varepsilon^{2}-1)^{2}}{\varepsilon^{2}}\left[ x+\frac{\varepsilon^{2}}{1-\varepsilon^{2}}\right]^{2}-1,
\end{equation*}
one can show $P(x)>0$ for $\frac{\varepsilon}{1+\varepsilon}<x\leq 1$. By surjectivity of the map $\beta\mapsto u_{\beta}$, it follows there exists $\beta_{\ast}(\varepsilon)\in\mathbb{T}^{2}$ such that $K_{\beta_{\ast}(\varepsilon)}>0$. The proof of the lemma follows.
\end{proof}
From lemma \ref{heat}, from \eqref{thegoodexp} we have that if one takes $U\in B(U_{\beta})$, then for any $W\in [U]_{0}$ there exists $\delta(W)>0$ such that
\begin{equation*}
\Psi(t; W)<0 \quad \text{for all}\hspace{2mm}-\delta(W)<t<\delta(W).
\end{equation*}
As a result, the set of all inadmissible velocities $G_{\beta_{\ast}(\varepsilon)}^{\times}(\mathsf{E}_{\ast}^{\varepsilon}; \mathcal{F})$ corresponding to the spatial configuration $\beta_{\ast}(\varepsilon)\in \mathbb{T}^{2}$ is non-empty, whence $\mathcal{G}^{\times}(\mathsf{E}_{\ast}^{\varepsilon}; \mathcal{F})\neq \{\varnothing\}$. This concludes the proof of the theorem.
\end{proof}
As an immediate corollary, we have the following.
\begin{cor}
Let $\{\sigma_{\beta}\}_{\beta\in\mathbb{T}^{2}}$ be a family of frictionless and physical scattering maps. One can find $Z_{0}\in\mathcal{P}_{2}(\mathsf{E}_{\ast}^{\varepsilon})\times\mathbb{R}^{6}$ for which there exists no associated local-in-time physical weak solution of \eqref{newty}.
\end{cor}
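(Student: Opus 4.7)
My plan is to derive this corollary directly from Theorem \ref{mainell} by transporting an inadmissible velocity germ into an explicit initial datum $Z_{0}$ and then showing that the initial-condition clause of Definition \ref{physweak} cannot be honored by any candidate trajectory. The argument should be short: the scattering map hypothesis will play no active role, since the obstruction lies entirely on the pre-collisional side of $t = 0$.

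First I would invoke Theorem \ref{mainell} to obtain $\beta_{\ast} \in B_{\varepsilon}$ together with a vector $V_{0} \in \mathbb{R}^{6}$ (identified, as per the convention preceding Definition \ref{formaldeffy}, with its associated constant map) such that $[V_{0}]_{0} \in G_{\beta_{\ast}}^{\times}(\mathsf{E}_{\ast}^{\varepsilon}; \mathcal{F})$; an explicit choice is provided by Lemma \ref{heat}. I would then pick $X_{0} \in \partial\mathcal{P}_{2}(\mathsf{E}_{\ast}^{\varepsilon})$ to be any boundary configuration whose parameterisation \eqref{boundbeta} is $\beta_{\ast}$, and set $Z_{0} := [X_{0}, V_{0}]$.

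Next I would suppose, toward contradiction, that there exists a physical local-in-time weak solution $X \in C^{0}(I, \mathcal{P}_{2}(\mathsf{E}_{\ast}^{\varepsilon}))$ of \eqref{newty} on some open interval $I \ni 0$ with initial state $Z_{0}$. Since $X_{0} \in \partial\mathcal{P}_{2}(\mathsf{E}_{\ast}^{\varepsilon})$, Definition \ref{physweak} requires $[\dot{X}]_{0}^{-} = V_{0}$ as an equality of left-germs at $0$. Unfolding Definition \ref{lrgerms} yields some $\delta > 0$ for which $\dot{X}(t) = V_{0}$ almost everywhere on $(-\delta, 0]$, and combining this with $X(0) = X_{0}$ and the continuity of $X$ gives $X(t) = X_{0} + tV_{0}$ throughout $(-\delta, 0]$. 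However, inadmissibility of $[V_{0}]_{0}$ provides an open neighborhood $\mathcal{N}$ of $0$ on which this very linear trajectory satisfies $\mathscr{L}_{2}(\mathsf{P}(t) \cap \overline{\mathsf{P}}(t)) > 0$. Shrinking $\delta$ if necessary so that $(-\delta, 0) \subset \mathcal{N} \cap I$, one obtains a direct contradiction with the requirement $X(t) \in \mathcal{P}_{2}(\mathsf{E}_{\ast}^{\varepsilon})$ on $I$.

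I do not expect any genuine obstacle beyond Theorem \ref{mainell} itself. The only mildly delicate point is verifying that the germ-level obstruction lifts cleanly to a vector-level initial condition; this is automatic because $\mathcal{F}(\mathbb{R}, \mathbb{R}^{6})$ consists of constant maps, so every representative of the left-germ $[V_{0}]_{0}^{-}$ agrees with $V_{0}$ on some left-neighborhood of $0$, and inadmissibility is intrinsically a neighborhood-of-$0$ property. The frictionless-and-physical hypothesis on $\{\sigma_{\beta}\}$ is therefore redundant for this particular non-existence result, even though it would be essential were one hoping to construct positive existence results.
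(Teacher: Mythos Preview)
Your proposal is correct and matches the paper's approach: the paper states this result as ``an immediate corollary'' of Theorem \ref{mainell} without giving any explicit argument, and your proof by contradiction is precisely the unwinding of that immediacy via the initial-condition clause of Definition \ref{physweak}. Your closing observation that the frictionless hypothesis plays no active role is accurate and worth recording, since the obstruction lives entirely in the left-germ condition $[\dot{X}]_{0}^{-}=V_{0}$ before any scattering is invoked.
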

\section{Closing Remarks}\label{closrem}
%One could try to extend the result of \textsc{Ballard} from spaces of velocities to spaces of functions, i.e. piecewise analytic. This would give an existence theory for rolling motions. This involves scattering maps between spaces of velocity maps. To do this for a physical problem (you can do it in general, which is good anyway) you would need to show that the set of all physical rolling maps is non-empty.
In the case of no external forcing $(F\equiv 0)$, theorem \ref{mainresult} makes it clear that one cannot establish an existence theory for {\bf (IBVP)} solely in the class of dynamics for which $t\mapsto X(t)$ is piecewise linear and both left- and right-differentiable on $\mathbb{R}$. One must therefore attempt to construct solutions associated to such initial data -- in the sense of definition \ref{formaldeffy} -- which lie in a class of maps of {\em lower regularity}. Notably, one cannot expect $t\mapsto V(t)$ to be left-differentiable on $\mathbb{R}$. In particular, this necessary drop in regularity will give rise to solutions of {\bf (IBVP)} for which infinitely-many collisions of the sets $\mathsf{P}$ and $\ov{\mathsf{P}}$ occur on a compact subinterval of $\mathbb{R}$. 

\subsection*{Acknowledgements}
The author would like to thank Gilles Francfort, Laure Saint-Raymond and Jean Taylor for stimulating discussions on the dynamics of rigid bodies when he was a Courant Instructor at the Courant Institute of Mathematical Sciences, New York City. He would also like to thank Xiaoyu Zheng, Peter-Palffy-Muhoray, Epifanio Virga, Robin Knops and Heiko Gimperlein for their insightful comments.

\subsection*{Declarations of Interest}
None.

\appendix
%
%\section{Our Main Result Expressed in the Framework of Ballard}\label{language}
%Given that we work in flat Euclidean space, we shall not use the language of differential geometry. We begin with some definitions.
%\begin{defn}
%We.
%\end{defn}
%We recast the hypotheses.
%\begin{enumerate}[{\bf (h1)}]
%\item H
%\item H
%\item H 
%\item H
%\end{enumerate}
%By employing theorem in \cite{ballard2000dynamics}, we have the existence of.

\bibliography{biblio}

\end{document}